\newcommand{\excise}[1]{}
\newtheorem{theorem}{Theorem}%[section]
\newtheorem{lemma}[theorem]{Lemma}
\newtheorem*{thm*}{Theorem}
\newtheorem*{lem*}{Lemma}
\newtheorem*{prp*}{Proposition}
\newtheorem{proposition}[theorem]{Proposition}
\theoremstyle{definition}
\newtheorem{remark}[theorem]{Remark}
\newtheorem*{remark*}{Remark}
\def\<{\left\langle}
\def\>{\right\rangle}
\newcommand{\ee}{\mathrm{e}} % exponential
\newcommand{\eem}{\varepsilon} % equiv mult / Bott-Lefschetz
\newcommand{\exterior}{\textstyle{\bigwedge}}
\newcommand{\Mbar}{\overline{M}}
\newcommand{\ev}{\mathrm{ev}}
\newcommand{\isom}{\cong}
\renewcommand{\tilde}{\widetilde}
\newcommand{\qshift}[1]{q^{Q_{#1}\partial_{Q_{#1}}}}
\newcommand{\Acom}[1]{\mathsf{A}_{#1,\mathrm{com}}}
\newcommand{\Aa}{\mathsf{A}}
\newcommand{\TT}{\mathsf{T}}
\newcommand{\MM}{\mathsf{M}}
\newcommand{\UU}{\mathsf{U}}
\renewcommand{\AA}{\mathbb{A}}
\newcommand{\PP}{\mathbb{P}}
\newcommand{\CC}{\mathbb{C}}
\newcommand{\Hom}{\mathrm{Hom}}
\newcommand{\Sym}{\mathrm{Sym}}
\newcommand{\OO}{\mathcal{O}}
\newcommand{\QM}{\mathcal{Q}} % quasimaps
\newcommand{\Zas}{\mathcal{Z}} % zastava
\newcommand{\sfT}{\mathsf{T}} 
\newcommand{\sfA}{\mathsf{A}} 
\newcommand{\sfS}{\mathsf{S}} 
\newcommand{\R}{\mathbb{R}} 
\newcommand{\Z}{\mathbb{Z}} 
\newcommand{\C}{\mathbb{C}} 
\newcommand{\Eff}{\operatorname{Eff}} 
\newcommand{\ord}{\operatorname{ord}} 
\newcommand{\End}{\operatorname{End}}
\newcommand{\corr}[1]{\left\langle #1 \right\rangle} 
\newcommand{\parfrac}[2]{\frac{\partial #1}{\partial #2}} 
\begin{document}

\title{On the finiteness of quantum K-theory of a homogeneous space}

\date{February 12, 2020}
\author[Anderson]{David Anderson}
\address{Department of Mathematics\\ Ohio State University\\ 100 Math Tower, 231 West 18th Ave. \\ Columbus,  OH 43210\\ USA}
\email{anderson.2804@math.osu.edu}

\author[Chen]{Linda Chen}
\address{Department of Mathematics and Statistics\\ Swarthmore College\\ Swarthmore, PA 19081\\ USA}
\email{lchen@swarthmore.edu}

\author[Tseng]{Hsian-Hua Tseng}
\address{Department of Mathematics\\ Ohio State University\\ 100 Math Tower, 231 West 18th Ave. \\ Columbus,  OH 43210\\ USA}
\email{hhtseng@math.ohio-state.edu}

\thanks{D. A. is supported in part by NSF grant DMS-1502201. L. C. is supported in part by Simons Foundation Collaboration grant 524354. H.-H. T. is supported in part by NSF grant DMS-1506551.}

\begin{abstract}
We show that the product in the quantum K-ring of a generalized flag manifold $G/P$ involves only finitely many powers of the Novikov variables.  In contrast to previous approaches to this finiteness question, we exploit the finite difference module structure of quantum K-theory.  At the core of the proof is a bound on the asymptotic growth of the $J$-function, which in turn comes from an analysis of the singularities of the zastava spaces studied in geometric representation theory.

An appendix by H.~Iritani establishes the equivalence between finiteness and a quadratic growth condition on certain shift operators.
\end{abstract}

\maketitle

Let $G$ be a simply connected complex semisimple group, with Borel subgroup $B$, maximal torus $T$, and standard parabolic group $P$. The main aim of this article is to prove a fundamental fact about the quantum K-ring of the homogeneous space $G/P$.

\begin{thm*}
The structure constants for (small) quantum multiplication of Schubert classes in $QK_T(G/P)$ are polynomials in the Novikov variables, with coefficients in the representation ring of the torus.
\end{thm*}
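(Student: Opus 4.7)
The plan is to translate polynomiality of the structure constants into an asymptotic growth bound on the Novikov expansion of the $J$-function, and then to obtain that bound from the algebraic geometry of zastava spaces. The small $J$-function of $G/P$,
\[
J_{G/P}(Q,q) = \sum_{d \in \Eff(G/P)} Q^d\, J_d(q) \in K_T(G/P)[\![Q]\!],
\]
is a cyclic generator for the action of the commuting shift operators $A^\lambda$ (indexed by cocharacters $\lambda$), which are K-theoretic twists of $q$-shift operators in the Novikov variables. Since the small quantum product of Schubert classes can be extracted from $A^\lambda J_{G/P}$, polynomiality in $Q$ of the structure constants corresponds to a polynomial bound on its coefficients, and by the equivalence proved in Iritani's appendix it suffices to establish the quadratic growth condition (in $d$) that he formulates for the $Q^d$-coefficients of the shift-operator orbit of $J_{G/P}$.

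Next I would identify $J_d$ with an equivariant K-theoretic Euler characteristic on the zastava space $\Zas_d$ parametrizing based rational maps $\PP^1 \to G/P$ of degree $d$. These spaces, studied by Finkelberg--Mirkovi\'c, Braverman--Finkelberg--Gaitsgory and others, are equipped with a $T$-action whose fixed loci are explicit, a factorization structure $\Zas_{d_1} \times \Zas_{d_2} \hookrightarrow \Zas_{d_1+d_2}$, and an identification with transverse slices in the affine Grassmannian of the Langlands dual group. These features give enough tools to compute $\chi^T(\Zas_d, \OO_{\Zas_d} \otimes (\text{twist}))$ and to compare it across degrees by localization and factorization.

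The heart of the argument, and where I expect the main difficulty, is extracting the required quadratic bound on $J_d$ uniformly in $d$. The strategy is to use the rational singularities of $\Zas_d$ (known for these affine Grassmannian slices) so that its K-theoretic pushforwards behave as if the spaces were smooth, and then to control the $T$-weights appearing in $R\pi_* \OO_{\Zas_d}$ via the linear-in-$d$ growth of $\dim \Zas_d$ and of the $T$-weight polytope. This should yield linear-in-$d$ support in $q$ and quadratic-in-$d$ support in the representation ring of $T$ for $J_d$, and the same bounds for $A^\lambda J_{G/P}$, since each $A^\lambda$ shifts characters by a bounded amount. Feeding these estimates back through Iritani's equivalence produces polynomiality of the structure constants in $Q$. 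The substantial technical work lies in the uniform singularity analysis of the zastava spaces---in particular, the geometric representation-theoretic input that controls their K-theoretic invariants across all degrees.
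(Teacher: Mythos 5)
Your overall blueprint---use the zastava-space geometry to bound the $q$-orders of the coefficients of $J$, and then feed a quadratic growth condition through the finite-difference structure and Iritani's equivalence---is in the right spirit, and the key role you assign to zastava-space singularity theory matches the paper. But there are three concrete gaps.

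First, and most seriously, you conflate growth bounds for the $J$-function with growth bounds for the fundamental solution $\TT$, and the latter do not follow from the former. Iritani's appendix establishes the equivalence of quadratic growth \emph{for $\TT$} with polynomiality in $Q$ of the shift operators $\sfA = \sfT^{-1} P\, q^{pQ\partial_Q}\sfT$. The paper needs both inputs: the $J$-bound (Lemma~\ref{l.bound}, $\nu_d \ge r(d)+\tfrac{1}{2}(d,d)$) controls which $Q^d$ can appear in the $q^{\ge 0}$ part of $\prod_k \qshift{i_k}\tilde J$, while the $\TT$-bound is what guarantees that the inductive elimination of these $q^{\ge 0}$ terms terminates. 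You assert ``each $A^\lambda$ shifts characters by a bounded amount,'' but a priori $\sfA$ is only a power series in $Q$; that it is a \emph{polynomial} in $Q$ is a nontrivial theorem of Kato, which the paper imports and then converts (via Iritani's Proposition, in the direction shift-polynomial $\Rightarrow$ $\TT$-quadratic-growth) into the needed $\TT$-bound. Your proposal has no substitute for this step, and the authors explicitly remark that they were unable to prove a $\TT$-bound directly from zastava geometry.

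Second, the zastava spaces the paper uses are attached to $G/B$ only (they arise as fibers of the evaluation-at-$\infty$ map on Drinfeld's quasimap space into $\Pi = \prod_i \PP(V_{\varpi_i})$), not to $G/P$. The partial flag case is handled by a separate transfer argument: Woodward's Peterson--Woodward comparison furnishes birational morphisms between pointed graph spaces for $G/B$, $G/P'$, and $G/P$, which gives $\TT_{d_P}(\xi) = \pi_*\TT_{d_B}(\pi^*\xi)$ and hence transports the $G/B$ bounds to $G/P$ (Lemma~\ref{l.boundP}). Your plan to work with zastava spaces for $G/P$ directly would require a different compactification and a re-derivation of the canonical sheaf computations, which the cited literature does not supply.

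Third, the claimed mechanism for extracting the quadratic bound---``linear-in-$d$ growth of $\dim\Zas_d$ and of the $T$-weight polytope''---is not what produces the precise exponent $r(d)+\tfrac{1}{2}(d,d)$. Dimension growth of $\Zas_d$ is $2|d|$, which is linear, and by itself gives no quadratic term. The quadratic part comes from the Braverman--Finkelberg computation that $\omega_{\Zas_d}(\Delta)$ is trivial as a line bundle with $q$-weight $(d,d)/2$ (and $\omega_{\Zas_d}$ itself trivial with weight $(\rho,d)+(d,d)/2$ in simply-laced type), combined with the dlt/thrifty-resolution machinery of Koll\'ar to transport this along the Kontsevich resolution. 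Finally, to turn the bound into finiteness one also needs the elementary inequality $r(d)+\tfrac{1}{2}(d,d) > d_i$ for $d>0$ (Appendix~\ref{app:ineq}), which fails in type E$_8$ and forces the sharper simply-laced bound there; this case analysis is absent from your outline.

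Separately, you do not address how to pass from products of line bundles to products of Schubert classes; the paper's Lemma~\ref{l.generate} (generation of $F(T)\otimes K_T(G/P)$ by line bundles after localization) is what lets one write Schubert classes as polynomials in the $P_i$ with coefficients in the fraction field and then descend back to $R(T)[Q]$.
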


This is proved as Theorem~\ref{t.finite} below.  A priori, quantum structure constants are power series in the Novikov variables, which keep track of degrees of curves; our theorem says that in fact, only finitely many degrees appear.  This property is often referred to as {\em finiteness} of the quantum product.

Finiteness has been the subject of conjectures since the beginnings of the combinatorial study of quantum K-theory in Schubert calculus.  Indeed, this property is a foundational prerequisite for the main components of Schubert calculus: a presentation of the quantum K-ring as a quotient by a polynomial ring; a set of polynomial representatives for Schubert classes; and finally, combinatorial formulas for the structure constants themselves.  

In quantum cohomology, finiteness of the quantum product is immediate from the definition.  In this case, the structure constants are Gromov-Witten invariants---certain integrals on the moduli space of stable maps into $G/P$---and they automatically vanish for curves of sufficiently large degree, by dimension reasons.  In K-theory, by contrast, the analogous Gromov-Witten invariants are certain Euler characteristics on the moduli space, and there is no reason for them to vanish for large degrees---in fact they do not.   The structure constants for the quantum product in K-theory are rather complicated alternating sums of Gromov-Witten invariants, so a direct proof of finiteness involves demonstrating massive cancellation.

In the cases where finiteness was previously known, this direct approach was used, employing a detailed analysis of the geometry of the moduli space of stable maps, and especially its ``Gromov-Witten subvarieties'', whose Euler characteristics compute K-theoretic Gromov-Witten invariants of $G/P$.  In their paper on Grassmannians, Buch and Mihalcea showed that these Gromov-Witten varieties are rational for sufficiently large degrees; this implies that the corresponding invariants are equal to $1$, and the required cancellation can be deduced combinatorially \cite{bm}.  Together with Chaput and Perrin, they extended this idea to prove finiteness for {\em cominuscule varieties}, a certain class of homogeneous varieties of Picard rank one \cite{bcmp1,bcmp2}.  (Furthermore, according to \cite[Remark 1.1]{bcmp2}, finiteness holds for any projective rational homogeneous space of Picard rank one.)

Recently, Kato \cite{k1, k2} has proven some remarkable conjectures \cite{llms} about the quantum K-ring of a {\em complete} flag variety $G/B$.  Up to inverting some elements, he establishes ring isomorphisms
\[
  QK_T(G/B) \isom K_T^\circ( \text{ semi-infinite flag variety } ) \isom K^T_\circ( \text{ affine Grassmannian }).
\]
In particular, Kato's work implies finiteness for $QK_T(G/B)$.  See \cite[Corollary~3.3]{k1}, noting that the argument given there relies on our Lemma~\ref{l.Acom} (in establishing the first isomorphism above), but otherwise is independent of our approach.

In this paper we prove the finiteness result for $QK_T(G/P)$ for all partial flag varieties. The starting point of our method is the fundamental fact that quantum K-theory admits the structure of a {\em $\mathsf{D}_q$-module}. This structure was first found for the quantum K-theory of the complete flag variety $Fl_{r+1}=SL_{r+1}/B$ by Givental and Lee, and later derived in general by Givental and Tonita from a characterization theorem of quantum K-theory in terms of quantum cohomology, the so-called {\em quantum Hirzebruch-Riemann-Roch theorem} \cite{gl,gt}. As explained by Iritani, Milanov, and Tonita, this $\mathsf{D}_q$-module structure is manifested as a difference equation (Equation (\ref{dq2}) below) satisfied by certain generating functions $J$ and $\TT$ of K-theoretic Gromov-Witten invariants; they also explain how the quantum product by a line bundle is related to these generating functions and use this to compute the quantum product for $Fl_3$ \cite{imt}.  More details are reviewed in \S\ref{subsec:J}.  

The general strategy of our proof can be summarized as follows.  If one can appropriately bound the coefficients appearing in the generating functions $J$ and $\TT$, then results of \cite{imt} allow one to deduce that the quantum product by a line bundle is finite.  For a complete flag variety, this is sufficient, since $K_T(G/B)$ is generated by line bundles.  In fact, it is also true that the K-theory of $G/P$ is generated by line bundle classes, after inverting certain elements of the representation ring; this seems to be less well known, so we include a proof in Lemma~\ref{l.generate}.

The technical heart of our argument lies in obtaining the appropriate bound on the growth of coefficients of $J$ and $\mathsf{T}$ as $q\to +\infty$.  Here we divide the problem and treat the $G/B$ and $G/P$ cases separately.  For $G/B$, we analyze the geometry of the {\em zastava space}, a compactification of the space of (based) maps studied extensively in geometric representaion theory.  Specifically, we use a computation of the canonical sheaf of the zastava space due to Braverman and Finkelberg \cite{bf1,bf2}, together with some properties of its singularities.  This leads to the bound for $J$ stated in Lemma~\ref{l.bound}, as well as the stronger bound of Lemma~\ref{l.boundADE} for simply-laced types.  For bounds for $\mathsf{T}$ we appeal to Kato's work and a result of H. Iritani (the Proposition of Appendix \ref{app:iritani}). We then transfer our bounds for $G/B$ to bounds for $G/P$, using the main geometric constructions in Woodward's proof of the Peterson comparison formula \cite{w}.

With the bounds in hand, we deduce finiteness in \S\ref{sec:finite}.  Here our arguments take advantage of the explicit form of our bounds for $J$, together with an inequality in root lattices proved in Appendix \ref{app:ineq}.

We expect our methods to find further applications in quantum Schubert calculus.  Most immediately, we can establish a presentation of the quantum K-ring of $SL_{r+1}/B$, resolving a conjecture by Kirillov and Maeno \cite{lm,iim}.  (Using a different definition of quantum K-theory, a similar presentation was obtained in \cite{kpsz}.)  Together with algebraic work done by Ikeda, Iwao, and Maeno \cite{iim}, this confirms some conjectural relations between the quantum K-ring of the flag manifold and the K-homology of the affine Grassmannian \cite{llms}, giving an alternative to Kato's approach.  Some results in this direction are included in our preprint \cite{act}.

A secondary goal of this article is to illustrate the power of finite-difference methods in quantum Schubert calculus.  To this end, we have included a fair amount of background.  We hope these sections may serve as a helpful companion to the foundational  papers of Givental and others.

\bigskip
\noindent
{\it Acknowledgements}. 
We thank A.~Givental, T.~Ikeda, H.~Iritani, S.~Kato, S.~Kov\'acs, and C.~Li for helpful discussions, and the referees for insightful comments that improved the manuscript.

%%%%
\section{Background}
%%%%

%%%
\subsection{Roots and weights}
%%%

Let $\Lambda$ be the weight lattice for the torus $T$, and let $\varpi_1,\ldots,\varpi_r$ be the fundamental weights for the Lie algebra of $G$.  The representation ring $R(T)$ is naturally identified with the group ring $\mathbb{Z}[\Lambda]$, and can be written as a Laurent polynomial ring $\mathbb{Z}[\ee^{\pm \varpi_1},\ldots,\ee^{\pm\varpi_r}]$.  The simple roots $\alpha_1,\ldots,\alpha_r$ generate a sublattice of $\Lambda$.  The coroot lattice $\check\Lambda$ has a basis of simple coroots $\check\alpha_1,\ldots,\check\alpha_r$, dual to $\varpi_1,\ldots,\varpi_r$.  We often write
\[
  \lambda = \lambda_1 \varpi_1 + \cdots + \lambda_r \varpi_r \quad \text{ and } \quad d = d_1\check\alpha_1+\cdots+d_r\check\alpha_r
\]
for elements of $\Lambda$ and $\check\Lambda$.  Usually, $d$ denotes a {\em positive element} of the coroot lattice, meaning all the integers $d_i$ are nonnegative.  We write $d\geq 0$ or $d\in \check\Lambda_+$ to indicate positive elements, and $d>0$ to mean a nonzero such $d$.  This induces a partial order in the standard way, so $d'\geq d$ iff $d'-d\geq 0$; that is, $d'_i\geq d_i$ for all $i$.
 
The vector spaces $\Lambda\otimes\mathbb{R}$ and $\check\Lambda\otimes\mathbb{R}$ are identified using the Weyl-invariant inner product $(\;,\;)$, normalized so that $(\alpha_i,\alpha_i)=2$ when $\alpha_i$ is a short root.  For example, this means $(d,\lambda) = \sum d_i\lambda_i$.  For $G=SL_{r+1}$, we have
\[
 (d,d) = \sum_{i=1}^{r+1} (d_i-d_{i-1})^2,
\]
where by convention $d_0=d_{r+1}=0$.

A {\em standard parabolic subgroup} is a closed subgroup $P$ such that $G\supseteq P\supseteq B$.  By recording which negative simple roots occurs as weights on the Lie algebra of $P$, such parabolics correspond to subsets of the simple roots.  (To be clear, $B$ corresponds to the empty set, while $G$ corresponds to the whole set of simple roots.)  Let $I_P\subseteq \{1,\ldots,r\}$ be the indices of simple roots corresponding to $P$.

The sublattice $\Lambda_P\subseteq \Lambda$ of weights $\lambda$ such that $(\check\alpha_i,\lambda)=0$ for $i\in I_P$ is spanned by the weights $\varpi_j$ for $j\not\in I_P$.  Dually, $\check\Lambda_P \subseteq \check\Lambda$ is the sublattice spanned by $\check\alpha_i$ for $i\in I_P$.  We write $\check\Lambda^P = \check\Lambda/\check\Lambda_P$, and $\check\Lambda^P_+$ for the image of $\check\Lambda_+$.  So $\check\Lambda^P_+$ is spanned by the images of $\check\alpha_i$ for $i\not\in I_P$.

Let $\rho=\varpi_1+\cdots+\varpi_r$ be the {\em Weyl element}, the smallest regular dominant weight.  For any $d\in\check\Lambda$, we have  $(d,\rho)=\sum d_i =: |d|$.

%%%
\subsection{Flag varieties}\label{subsec:fl}
%%%

Each weight $\lambda\in \Lambda$ gives rise to an equivariant line bundle $P^\lambda$ on the complete flag variety $G/B$.  Writing $P_i$ for the line bundle corresponding to $\varpi_i$, we have $P^\lambda = P_1^{\lambda_1}\cdots P_r^{\lambda_r}$ when $\lambda = \lambda_1 \varpi_1 + \cdots + \lambda_r \varpi_r$.

Each fundamental weight $\varpi_i$ corresponds to an irreducible representation $V_{\varpi_i}$.  There is an embedding
\[
\iota\colon G/B \hookrightarrow \Pi := \prod_{i=1}^r \mathbb{P}(V_{\varpi_i}),
\]
such that $P_i = \iota^*\mathcal{O}_i(-1)$ is the pullback of the tautological subbundle from the $i$th factor of $\Pi$.

For example, when $G=SL_{r+1}$, the flag variety $G/B=Fl_{r+1}$ parametrizes all complete flags in $\mathbb{C}^{r+1}$.  We have $V_{\varpi_i}=\exterior^i\mathbb{C}^{r+1}$, and the line bundle $P_i$ is the top exterior power $\exterior^iS_i$ of the $i$th tautological subbundle on $Fl_{r+1}$.\footnote{Our conventions agree with \cite{gl}, but are opposite to those of \cite{imt}, where $P_i$ is replaced by $P_i^{-1}$.}

Equivariant line bundles on $G/P$ correspond to weights $\lambda\in \Lambda_P$.  We will continue to use the notation $P^\lambda$ for such bundles; the meaning of ``$P$'' (as parabolic or line bundle) should be clear from context. 
As with $G/B$, there is an embedding
\[
\iota\colon G/P \hookrightarrow  \prod_{j\not\in I_P} \mathbb{P}(V_{\varpi_j}),
\]
with $P_j$ being the pullback of $\mathcal{O}(-1)$ from the $j$th factor.

There are natural identifications $H_2(G/B,\mathbb{Z}) = \check\Lambda$ and $\mathrm{Eff}_2(G/B) = \check\Lambda_+$, as well as $H_2(G/P,\mathbb{Z}) = \check\Lambda^P$ and $\mathrm{Eff}_2(G/P) = \check\Lambda^P_+$.  
The pushforward $H_2(G/B) \to H_2(G/P)$ is identified with the quotient map $\check\Lambda \to \check\Lambda^P$.  The pullback $H^2(G/P)\to H^2(G/B)$ dual to this projection is identified with the inclusion $\Lambda_P \hookrightarrow \Lambda$.

It is a basic fact that $K_T(G/B)$ is generated by $P_1,\ldots,P_{r}$ as an $R(T)$-algebra; that is, there is a surjective homomorphism
\[
  R(T)[P_1,\ldots,P_{r}] \twoheadrightarrow K_T(G/B).
\]
(See, for example, \cite[\S4]{kk}.)  Thus there is an $R(T)$-basis for $K_T(G/B)$ consisting of monomials in the $P_i$, and in particular, any other basis---for example, a Schubert basis---can be written as a finite $R(T)$-linear combination of such monomials.

In general, it is not the case that $K_T(G/P)$ is generated by line bundles as an $R(T)$-algebra.  However, after extending scalars to the fraction field $F(T)$ of $R(T)$, the algebra is generated by line bundles.  This fact seems to be less well known, although it is implicit in \cite{bcmp3}, and the idea of the proof can be found in \cite[Lemma~4.1.3]{cfks}.  For clarity, we state a general version here.

\begin{lemma}\label{l.generate}
Let $X \hookrightarrow Y$ be a closed $T$-equivariant inclusion of smooth varieties.  Assume that the restriction homomorphism $K_T(Y^T) \to K_T(X^T)$ is surjective.  If $\{\alpha\}$ is a set of generators for $K_T(Y)$ as an $R(T)$-algebra, then the restrictions $\{\beta\}$ generate $F(T)\otimes_{R(T)} K_T(X)$ as an $F(T)$-algebra.
\end{lemma}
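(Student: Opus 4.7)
The plan is to pass to the $T$-fixed locus via the equivariant K-theoretic localization theorem, which says that for a smooth projective (or otherwise suitably compact) $T$-variety $Z$, the restriction $K_T(Z) \to K_T(Z^T)$ becomes an isomorphism after tensoring with $F(T)$ over $R(T)$. Applied to $Y$, any $R(T)$-algebra generators of $K_T(Y)$ automatically give $F(T)$-algebra generators of $F(T) \otimes_{R(T)} K_T(Y)$, and the localization isomorphism $F(T) \otimes_{R(T)} K_T(Y) \cong F(T) \otimes_{R(T)} K_T(Y^T)$ transports these to $F(T)$-algebra generators of $F(T) \otimes_{R(T)} K_T(Y^T)$, namely the restrictions $\{\alpha|_{Y^T}\}$.

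Next, tensor the assumed surjection $K_T(Y^T) \twoheadrightarrow K_T(X^T)$ with $F(T)$; this preserves surjectivity. Functoriality of restriction (the commuting square with corners $Y, Y^T, X, X^T$) identifies the images of $\{\alpha|_{Y^T}\}$ in $F(T) \otimes_{R(T)} K_T(X^T)$ with $\{\beta|_{X^T}\}$, where $\beta = \alpha|_X$. Consequently $\{\beta|_{X^T}\}$ generates $F(T) \otimes_{R(T)} K_T(X^T)$ as an $F(T)$-algebra. Inverting the localization isomorphism $F(T) \otimes_{R(T)} K_T(X) \cong F(T) \otimes_{R(T)} K_T(X^T)$ then yields that $\{\beta\}$ generates $F(T) \otimes_{R(T)} K_T(X)$ as an $F(T)$-algebra, as desired.

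The entire argument is a short diagram chase in a commutative square of restriction maps, once the two vertical arrows have been inverted via localization. The one nontrivial ingredient is the localization theorem itself, which I would invoke as a black box in its Thomason / Atiyah--Segal form; the only thing to check is that $X$ and $Y$ lie within its scope, which in the intended application (where both are smooth projective $T$-varieties with isolated fixed points) is immediate. There is no significant combinatorial or algebraic obstacle beyond this.
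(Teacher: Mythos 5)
Your proof is correct and follows essentially the same route as the paper: pass to the fixed loci via the localization theorem, use the assumed surjectivity on fixed loci, and chase generators through the commutative square of restriction maps. The paper states this even more tersely (and notes one can invert only the elements $1-\ee^{-\alpha}$ for $\alpha$ a normal weight, rather than all of $R(T)$), but the underlying argument is identical.
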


\begin{proof}
The proof follows directly from the localization theorem, which gives natural isomorphisms $F(T)\otimes_{R(T)} K_T(X)  \isom F(T)\otimes_{R(T)} K_T(X^T)$.  A little more precisely, rather than passing to $F(T)$, it suffices to invert elements $1-\ee^{-\alpha}$ of $R(T)$, where $\alpha$ runs over characters appearing in the normal spaces to $X^T$ in $X$.
\end{proof}

A particular case of the lemma is this: 
\begin{quote}
{\em Whenever $X$ is a smooth projective variety with finitely many attractive fixed points, the $F(T)$-algebra $F(T)\otimes_{R(T)} K_T(X)$ is generated by the class of an ample line bundle.}
\end{quote}
An isolated fixed point $p$ of a (possibly singular) variety $X$ is called {\em attractive} if all the weights of the action of $T$ on the Zariski tangent space at $p$ lie in an open half-space.  This condition guarantees that under any equivariant embedding $X \hookrightarrow \PP^n$, each of the finitely many fixed points of $X$ maps to a distinct connected component of $(\PP^n)^T$, which in turn implies that the restriction map is surjective.

The standard torus action on $G/P$ has finitely many attractive fixed points, so the lemma applies to the case we study.  (A different, combinatorial argument for equivariant cohomology of $G/P$ is given in \cite[Remark~5.11]{bcmp3}.)

%%%
\subsection{Equivariant multiplicities and the fixed-point formula}\label{ss.localize}
%%%

One of the main tools for computing in quantum K-theory is torus-equivariant localization on moduli spaces.  We quickly review the main theorem we will use.  This material is standard; see, e.g., \cite{a} for an exposition aligned with our needs, \cite{brion} for a parallel discussion in the case of equivariant Chow groups, and \cite{behrend} for applications to Gromov-Witten theory.

Suppose a torus $T$ acts on a variety $X$.  The Grothendieck group of equivariant coherent sheaves is $K^T_\circ(X)$.  
There is a natural isomorphism
\begin{equation}\label{e.local-isom}
  F(T)\otimes_{R(T)} K^T_\circ(X^T) \xrightarrow{\sim} F(T)\otimes_{R(T)} K^T_\circ(X)
\end{equation}
induced by pushforward from the fixed locus.  (This goes back to Atiyah \cite{atiyah} and Quart \cite{quart}.)  Since $T$ acts trivially on $X^T$, the left-hand side is
\[
  F(T)\otimes_{R(T)} K^T_\circ(X^T) = F(T)\otimes_\mathbb{Z} K_\circ(X^T) = \bigoplus_{Z\subseteq X^T} F(T)\otimes_\mathbb{Z} K_\circ(Z),
\]
the direct sum over connected components $Z\subseteq X^T$.

If $Z\subseteq X^T$ is a connected component, the {\em equivariant multiplicity} of $X$ along $Z$ is the element $\eem_Z(X)$ of $F(T)\otimes_\mathbb{Z} K_\circ(Z)$ defined so that
\[
  \sum_{Z\subseteq X^T} \eem_Z(X) = [\mathcal{O}_X]
\]
under the isomorphism \eqref{e.local-isom}.  More generally, the localization isomorphism respects products by vector bundles: given a class $\xi\in K_T^\circ(X)$ (the Grothendieck group of equivariant vector bundles), one has
\begin{equation}\label{e.restrict}
  \sum_{Z\subseteq X^T} \eem_Z(X)\cdot \xi|_Z = \xi \cdot [\mathcal{O}_X].
\end{equation}
Here $(\cdot)|_Z$ denotes the restriction homomorphism $K_T^\circ(X) \to K_T^\circ(Z)$.

The localization isomorphism is natural in an evident way: if $\pi\colon X \to Y$ is a proper equivariant morphism, then there is a commuting square
\[
\begin{tikzcd}
  F(T)\otimes_{R(T)} K^T_\circ(X^T) \ar[r,"\sim"] \ar[d,"\pi_*"] & F(T)\otimes_{R(T)} K^T_\circ(X) \ar[d,"\pi_*"] \\
  F(T)\otimes_{R(T)} K^T_\circ(Y^T) \ar[r,"\sim"] & F(T)\otimes_{R(T)} K^T_\circ(Y).
\end{tikzcd}
\]
Naturality immediately implies a useful formula for equivariant multiplicities.  Assume $\pi_*[\mathcal{O}_X] = [\mathcal{O}_Y]$.  (For example, this holds if $X$ and $Y$ both have rational singularities and $\pi$ is birational, or has connected rational fibers.)  Then for any connected component $W\subseteq Y^T$, we have the formula
\begin{equation}\label{e.fpformula}
  \eem_W(Y) = \sum_{Z\subseteq (\pi^{-1}W)^T} \pi^Z_*  \eem_Z(X) ,
\end{equation}
the sum over connected components $Z\subseteq X^T$ which map into a given connected component $W\subseteq Y^T$, where $\pi^Z\colon Z \to W$ is the restriction of $\pi$.  This gives a means of computing the equivariant multiplicities.

If the connected component $Z\subseteq X^T$ is regularly embedded, with conormal bundle $N_{Z/X}^*$, then the equivariant multiplicity is
\begin{equation}\label{e.em}
  \eem_Z(X) = \frac{1}{\lambda_{-1}(N^*_{Z/X})}.
\end{equation}
Here the denominator is the K-theoretic Euler class of $N_{Z/X}$.  (More generally, for any vector bundle $E$ of rank $e$, one defines
\[
  \lambda_{-1}(E) = 1 - E + \exterior^2 E - \cdots +(-1)^e \exterior^e E.)
\]

Suppose $\pi\colon X \to Y$ is a proper equivariant map, and $W\subseteq Y^T$ is a connected component which is regularly embedded, such that all components $Z\subseteq (\pi^{-1}W)^T$ are regularly embedded in $X$.  (For example, this happens automatically if $X$ and $Y$ are nonsingular varieties.)  Combining \eqref{e.restrict}, \eqref{e.fpformula}, and \eqref{e.em}, we have
\begin{equation}\label{e.fp2}
 \frac{(\pi_*\xi)|_{W}}{\lambda_{-1}(N^*_{W/Y})} = \sum_{Z\subseteq (\pi^{-1}W)^T} \pi^Z_*\left( \frac{\xi|_Z}{\lambda_{-1}(N^*_{Z/X})} \right),
\end{equation}
for any element $\xi\in K^T_\circ(X) = K_T^\circ(X)$,

A simple special case of the equivariant multiplicity will be of particular interest to us.  When $X$ is affine, and $Z=p$ is any fixed point, the equivariant multiplicity is equal to the {\em graded character} $\mathrm{ch}(\mathcal{O}_X)$ (see, e.g., \cite{rossmann}).  If, furthermore, the fixed point is {\em attractive}, the equivariant multiplicity is equal to the multigraded Hilbert series of $\mathcal{O}_X$.  (For example, if $T$ acts on $X=\mathbb{A}^1$ by the character $\ee^\alpha$, then it acts on $\mathcal{O}_X = \mathbb{C}[x]$ by scaling $x$ by $\ee^{-\alpha}$, so we have $\eem_0(X) = \mathrm{ch}(\mathcal{O}_X) = 1/(1-\ee^{-\alpha})$.)

%%%
\subsection{Quantum K-theory and moduli spaces}\label{subsec:moduli}
%%%

The (genus $0$) K-theoretic Gromov-Witten invariants are defined as certain sheaf Euler characteristics on the space of $n$-pointed, degree $d$ stable maps,
\[
  \Mbar_{0,n}(G/P,d).
\]
This space comes with evaluation morphisms $\ev_i\colon \Mbar_{0,n}(G/P,d) \to G/P$ for $1\leq i\leq n$, which are equivariant for the action of $T$ on $G/P$ and the induced action on $\Mbar_{0,n}(G/P,d)$.  Given classes $\Phi_1,\ldots,\Phi_n\in K_T(G/P)$, there is a Gromov-Witten invariant
\[
  \chi(\Mbar_{0,n}(G/P,d),\,\ev_1^*\Phi_1 \cdots \ev_n^*\Phi_n) \in R(T).
\]

The {\em Novikov variables} keep track of curve classes in $G/P$; for $d\in \check\Lambda^P_+$, we write $Q^d = Q_1^{d_1}\cdots Q_r^{d_r}$.  The (small) quantum K-ring of $G/P$ is defined additively as
\[
  QK_T(G/P) := K_T(G/P) \otimes \mathbb{Z}[[Q]],
\]
and is equipped with a {\em quantum product} $\star$ which deforms the usual (tensor) product on $K_T(G/P)$.  Choosing any $R(T)$-basis\footnote{The classes $\Phi_w$ are not necessarily Schubert classes; in fact, after extending scalars from $R(T)$ to $F(T)$, we will use a monomial basis consisting of certain $P^\lambda$'s.} $\{\Phi_w\}$ for $K_T(G/P)$, and using the same notation for the corresponding $R(T)[[Q]]$-basis for $QK_T(G/P)$, one has
\[
  \Phi_u \star \Phi_v = \sum_{w,d} N_{u,v}^{w,d} Q^d \Phi_w,
\]
where a priori the right-hand side is an infinite sum over all $d\in \check\Lambda^P_+$.  (The structure constants $N_{u,v}^{w,d}$ are defined in a rather involved way via alternating sums of Gromov-Witten invariants; see \cite{g,l,bm} for details.)

We work mainly with two compactifications of the space  $\Hom_d(\PP^1,G/P)$ of degree $d$ maps from $\mathbb{P}^1$ to $G/P$.  The first is Drinfeld's {\em quasimap space} $\QM_d$, and we use it only for $G/B$.  This space may be defined as follows; see, e.g., \cite{b-icm} for more details.  For projective space $\mathbb{P}(V)$ and an integer $d_i\geq 0$, let $\PP(V)_{d_i} = \PP(\mathrm{Sym}^{d_i}\mathbb{C}^2\otimes V)$ be the projective space of $V$-valued binary forms of degree $d_i$.  (This is the quot scheme compactification of the space of degree $d$ maps $\mathbb{P}^1 \to \mathbb{P}(V)$.)  With $\Pi = \prod_{i=1}^r \PP(V_{\varpi_i})$ as above and $d\in \check\Lambda_+$, let $\Pi_d = \prod_{i=1}^r\PP(V_{\varpi_i})_{d_i}$.  This contains the space of maps $\Hom_d(\PP^1,\Pi)$ as an open subset.  The embedding $\iota \colon G/B \hookrightarrow \Pi$ induces an embedding $\Hom_d(\PP^1,G/B) \hookrightarrow \Hom_d(\PP^1,\Pi)$, and the quasimap space $\QM_d$ is the closure of $\Hom_d(\PP^1,G/B)$ inside $\Pi_d$.

Spaces of maps and quasimaps are equipped with a $\mathbb{C}^*$-action induced from an action on the source curve.  The action on $\mathbb{P}^1$ is given by $q\cdot [a,b] = [a,qb]$, where $q$ is a coordinate on $\mathbb{C}^*$, so the fixed points are $0=[1,0]$ and $\infty=[0,1]$.  The $\mathbb{C}^*$-fixed loci in $\Pi_d$ are easy to describe: for each expression $d=d^-+d^+$ (with $d^-,d^+\in\check\Lambda_+$), there is a fixed component $\Pi_d^{(d^+)}$ consisting of tuples of monomials of bidegree $(d_i^-,d_i^+)$ on the factor $\mathbb{P}(V_{\varpi_i})_{d_i}$.  Using monomials to denote weight bases for $\mathrm{Sym}^{d_i}\mathbb{C}^2$, we have
\[
  \Pi_d^{(d^+)} = \prod_{i=1}^r \mathbb{P}( x_i^{d_i^-}y_i^{d_i^+} \otimes V_{\varpi_i}),
\]
so each such component is isomorphic to $\Pi$ itself.  

The $\mathbb{C}^*$-fixed components of $\QM_d\subseteq \Pi_d$ are $\QM_d^{(d^+)}\subseteq \Pi_d^{(d^+)}$, each isomorphic to $G/B \subseteq \Pi$.  

If we also consider the action of $T$ induced from the target space $G/B$, the quasimap space $\QM_d$ has finitely many $\mathbb{C}^*\times T$-fixed points, indexed by $(d^+,w)$ as $w$ ranges over the Weyl group.

Our second compactification of the space of maps is the {\em graph space},
\[
  \Gamma (G/P)_d := \Mbar_{0,0}(\PP^1\times G/P, (1,d) ).
\]
It includes $\Hom_d(\PP^1,G/P)$ as the open subset of stable maps with irreducible source, regarded as the graph of a map $\PP^1\to G/P$.  This space also comes with an action of $\mathbb{C}^*\times T$, induced from the componentwise action on $\PP^1\times G/P$.  As explained in \cite[\S2.2]{gl} and \cite[\S2.6]{imt}, the $\mathbb{C}^*$-fixed components of $\Gamma (G/P)_d$ correspond to certain maps where the source curve is reducible.  For each decomposition $d=d^-+d^+$, there is a component $\Gamma (G/P)_d^{(d^+)}$ whose general points parametrize maps with source curve having three components: a ``horizontal'' $\PP^1$ with degree $0$ with respect to $G/P$; a ``vertical'' $\PP^1$ attached to the first component at the fixed point $0$, with $G/P$-degree $d^+$; and a ``vertical'' $\PP^1$ attached to the first component at $\infty$, with $G/P$-degree $d^-$.  (If $d^+$ or $d^-$ is zero, the corresponding component of the source curve is absent.)  There are also pointed versions of graph spaces, $\Gamma (G/P)_{n,d}$, with $n\geq0$ marked points, defined as $\Mbar_{0,n}(\PP^1\times G/P, (1,d))$.  The fixed loci of these pointed spaces are similar, with the marked points being allocated to one of the two vertical curves.

There is a birational morphism $\mu\colon \Gamma (G/B)_d \to \QM_d\subseteq \Pi_d$, described in \cite[\S3]{gl}, and 

the fixed component $\Gamma (G/B)_d^{(d^+)}$ maps onto $\QM_d^{(d^+)}$ under $\mu$.  There are also morphisms $\beta_n\colon \Gamma (G/P)_{n,d} \to \Mbar_{0,n}(G/P,d)$, which, composed with evaluation morphisms from $\Mbar_{0,n}(G/P,d)$ to $G/P$, give morphisms $\ev_i\colon \Gamma (G/P)_{n,d}\to G/P$, for $1\leq i\leq n$.  

A key property of each of these moduli spaces---$\Mbar_{0,n}(G/P,d)$, $\Gamma (G/P)_{n,d}$, and $\QM_d$---is that they have rational singularities.  (For the first two, this is a general fact about varieties with finite quotient singularities; for $\QM_d$, it is one of the main theorems of \cite{bf1,bf2}.)  We  exploit this to freely transport computations of Euler characteristics from one of these spaces to another.

%%%
\subsection{The $J$-function and $\mathsf{D}_q$-module structure}\label{subsec:J}
%%%

The structure of quantum K-theory becomes clearer when Gromov-Witten invariants are packaged into a generating function, the {\em $J$-function}. Note that the definitions of $J$ vary somewhat in the literature.  Ours is that of \cite{gl}; the function of \cite{imt} is equal to our $(1-q)J$.  The function of \cite{bf1} is a certain localization of our $J$-function.  This function satisfies a finite-difference equation, and it is this $\mathsf{D}_q$-module structure we exploit to prove finiteness of the quantum product.  Here we review the properties of the $J$-function which we will need.  In this subsection, $X$ may be any smooth projective variety, as considered in \cite{imt}.

Consider the evaluation morphism $\ev\colon \Mbar_{0,1}(X,d) \to X$, which is equivariant for $\mathbb{C}^*\times T$ (with $\mathbb{C}^*$ acting trivially on both $\Mbar_{0,1}(X,d)$ and $X$).  The $J$-function is a power series in $Q$, with coefficients in $K_{T}(X)\otimes\mathbb{Q}(q)$:
\begin{equation}\label{e.jdef}
 J := 1 + \frac{1}{1-q} \sum_{d>0} Q^d\,\ev_*\left( \frac{1}{1-qL} \right).
\end{equation}
Here the character $q$ identifies $K_{\mathbb{C}^*}(\mathrm{pt})=\mathbb{Z}[q^{\pm}]$, and $L$ is the cotangent line bundle on $\Mbar_{0,1}(X,d)$.   (Its fiber at a moduli point $[f:(C,p)\to X]$ is $T_p^*C$.)  We often write
\[
  J = \sum_{d\geq 0} J_d\,Q^d,
\]
with $J_d\in K_{T}(X)\otimes\mathbb{Q}(q)$.

In \cite{imt}, a {\em fundamental solution} $\TT$ is defined.  This is an element of $\mathrm{End}_{R(T)}(K_T(X))\otimes \mathbb{Q}(q)[[Q]]$, and is characterized by
\begin{equation}
  \chi(X,\,\Phi_u\cdot  \TT(\Phi_v) ) = \chi(X, \Phi_u\cdot\Phi_v) + \sum_{d>0} Q^d \chi\left(\Mbar_{0,2}(X,d), \, \ev_1^*\Phi_u\cdot \frac{1}{1-q L_1}  \cdot  \ev_2^*\Phi_v \right),
\end{equation}
for all $\Phi_u$ and $\Phi_v$ in an $R(T)$-basis of $K_T(X)$.  Here $L_1$ is the cotangent line bundle at the first marked point of $\Mbar_{0,2}(X,d)$.  As with $J$, we write $\TT = \sum_{d} Q^d \TT_d$.

From the definitions of $J$ and $\TT$, we see that $J$-function is recovered as $\TT(1)$.  (The factor of $1/(1-q)$ in the $d>0$ terms of $J$ arises from the pushforward by the forgetful morphism $\Mbar_{0,2}(X,d) \to \Mbar_{0,1}(X,d)$, via the string equation in quantum K-theory; see \cite[\S4.4]{l}.)

\begin{remark}
\label{T-identity}
Note that $\TT|_{q=\infty} = \TT|_{Q=0} = \mathrm{id}$. In particular, the expansion of $\TT$ at $q=+\infty$ is of the form $\TT=\mathrm{id}+O(q^{-1})$.
\end{remark}

The coefficients $J_d$ and the operators $\TT_d$ can be computed by localization on the pointed graph space $\Gamma(X)_{n,d}$, and we mainly use this characterization.  
Consider the fixed component $\Gamma(X)_{n,d}^{(n,d)}$ which parametrizes stable maps in $\Mbar_{0,n}(\PP^1\times X, (1,d))$ whose source curve has a horizontal component of bi-degree $(1,0)$ and a vertical component of bi-degree $(0,d)$  attached to the horizontal component at $0$,  with all $n$ marked points lying on the vertical component. The key is an identification
\[
  \Gamma(X)_{n,d}^{(n,d)} \isom \Mbar_{0,n+1}(X,d)
\]
obtained by taking account of the node at $0$ where the vertical and horizontal components are attached.

Recall from \S\ref{subsec:moduli} that $\mathbb{C}^*$ acts on $\Gamma(X)_{n,d}$ via its action on $\mathbb{P}^1$, by $q\cdot [a,b]=[a,qb]$, fixing $0=[1,0]$ and $\infty=[0,1]$.  The normal bundle to the fixed component $\Gamma(X)_{n,d}^{(n,d)}$ has rank $2$, and decomposes into a trivial line bundle of character $q^{-1}$ (corresponding to moving the node away from $0$ along the horizontal curve), and a copy of the tangent line bundle $L^*_{n+1}$ on $\Mbar_{0,n+1}(X,d)$ with character $q^{-1}$ (corresponding to smoothing the node).  (See, e.g., \cite[p.~201]{gl}, \cite[Proof of Lemma~5.2]{bf1}, or \cite[\S1.3, \S3.3]{kontsevich}.)

Now the localization formula \eqref{e.fpformula} for the map $\mu_*\colon K^T_\circ(\Gamma (X)_d) \to K^T_\circ(\QM_d)$ says
\begin{equation}\label{e.Jd}
  \eem_{\QM_d^{(d)}}(\QM_d) = \mu^{(d)}_*\left(\frac{1}{\lambda_{-1}(N^*)} \right)
\end{equation}
where $\mu^{(d)}$ is the restriction of $\mu$ to the fixed component $\Gamma (X)_d^{(d)}$, $N$ is the normal bundle to this component, and $\lambda_{-1}(N^*) = 1 - N^* + \exterior^2 N^* - \cdots = (1-q)(1-qL)$.  Using the identifications $\QM_d^{(d)}\isom X$, $\Gamma (X)_d^{(d)} \isom \Mbar_{0,1}(X,d)$, and $\mu^{(d)} = \ev$, the right-hand side is exactly
\[
J_d = \ev_*\left( \frac{1}{(1-q)(1-qL)} \right).
\]
Similar reasoning identifies $\TT_d(\xi)$ as
\begin{equation}\label{e.Td}
  \frac{1}{1-q}T_d(\xi) = (\ev_1)_*\left( \frac{\ev_2^*\xi}{(1-q)(1-qL_1)} \right),\end{equation}
where 
we use the identification $\Gamma (X)_{1,d}^{(1,d)} \isom \Mbar_{0,2}(X,d)$.  See \cite[\S2.2 and \S4.2]{gl}.

Next we turn to the difference equations satisfied by $J$ and $\TT$.  The main theorems of \cite{gl}, \cite{bf1} say that $J$ is an eigenfunction of the finite-difference Toda operator \cite{e}, \cite{s}, \cite{ffjmm} when $X=G/B$ is a complete flag variety of type A, D, or E.  (A modification of $J$ satisfies the corresponding system in non-simply-laced types \cite{bf2}.)  We only need part of this structure, which holds for general $X$, suitably interpreted as in \cite{imt}.  To simplify the equations, we often write
\[
  \tilde{J} = P^{\log Q/\log q} J \quad \text{ and } \quad \tilde{\TT} = P^{\log Q/\log q} \TT,
\]
where $P^{\log Q/\log q}$ means $P_1^{\log Q_1/\log q}\cdots P_r^{\log Q_r/\log q}$.

Consider the {\em $q$-shift operator}  $\qshift{i}: Q_j \mapsto q^{\delta_{ij}}Q_j$ which induces an action on power series in $Q$.
The $\mathsf{D}_q$-module structure of quantum K-theory has the following form.

For a finite sequence $I$ consisting of integers $1\leq i\leq r$,
\begin{equation}
 \left(\prod_{i\in I}\qshift{i} \right)\tilde{J} = \tilde{\TT} \left(\prod_{i\in I} \Aa_r\qshift{i}(1)\right). \label{dq1} \end{equation}
where the $\Aa_i$ are certain operators in $\mathrm{End}_{R(T)}(K_T(X))\otimes \mathbb{Q}[q][[Q]]$ defined in \cite{imt}; see especially \cite[Proposition~2.10]{imt}.
This is essentially a commutation relation between the operators $\tilde\TT$ and $\qshift{i}$ which follows from \cite[Remark~2.11]{imt}.  
Note that \[\left(\prod_{i\in I}\qshift{i}\right)P^{\log Q/\log q} = \left(\prod_{i\in I} P_i\right)P^{\log Q/\log q} \left(\prod_{i\in I}\qshift{i}\right).\]  
Cancelling a factor of $P^{\log Q/\log q}$ and noting that $\prod_{i\in I}\qshift{i}$ operates by replacing $J_d$ with $q^{\sum_{i\in I}d_i} J_d$, we can rewrite Equation~\eqref{dq1} as
\begin{equation}\label{dq2}
\prod_{i\in I} P_i \left(\sum_{d\geq 0} q^{\sum_{i\in I}d_i} J_d Q^d \right)= \TT\left(\prod_{i\in I} \Aa_i\qshift{i}(1)\right).
\end{equation}

We can write $a_I:= \prod_{i\in I} \Aa_i\qshift{i}(1)$ as $a_I= \sum_{d\geq 0} a_I^{(d)} Q^d$ where each $a_I^{(d)}$ is polynomial in $q$ by  \cite[Proposition~2.10]{imt}. 
As noted in Remark~\ref{T-identity}, $\TT=\mathrm{id}+O(q^{-1})$, so we can rewrite Equation~\eqref{dq2}  as
\begin{equation}
\label{dq4}
 \prod_{i\in I}  P_i \left( 1 + \sum_{d>0} q^{d_i} J_d Q^d \right)=\TT(a_I)=a_I+\cdots =
      a_I^{(0)} + \sum_{d> 0} a_I^{(d)}  Q^d + \cdots,
\end{equation}
 where the omitted terms vanish at $q=\infty$.

Therefore the right-hand side of Equation~\eqref{dq4}---namely, the leading terms of $a_I$---can be studied from the asymptotics of the left-hand side as $q\to+\infty$, specifically, the $q^{\geq0}$ coefficients of $q^{\sum_{i\in I}d_i} J_d$.  We will see examples of how this works in Lemma \ref{l.Acom} and Proposition \ref{p.monomial} below.

%%%%
\section{The zastava space and the $J$-function}
%%%%

To bound the degrees $Q^d$ appearing in quantum products, our main tool will be a bound on the $q$-degree of the $J$-function and the operator $\TT$.  To obtain the required bound, we need some technical properties of a slice of the quasimap space, called the {\em zastava space}.  Definitions and detailed descriptions of this space can be found in \cite{bf1}, \cite[\S2]{bfg}, and \cite{bdf}.  (The last reference provides explicit coordinates.)  We briefly review the main properties of the zastava space, and study a particular desingularization of it by the (Kontsevich) graph space.

%%%
\subsection{Singularities of the zastava space}\label{subsec:zastava}
%%%

The zastava space $\Zas_d$ is an affine variety which can be thought of as a compactification of based maps $(\PP^1,\infty) \to (G/B,w_\circ)$.  It is defined as a locally closed subvariety of $\QM_d$, as follows.  Let $\QM_d^\circ$ be the open subset of quasimaps which have no ``defect'' at $\infty\in\PP^1$; i.e., the locus parametrizing maps defined in a neighborhood of $\infty$.  This comes with an evaluation morphism $\ev_\infty\colon \QM_d^\circ \to G/B$, and the zastava space is a fiber of this morphism: $\Zas_d =\ev_\infty^{-1}(w_\circ)$.  It has dimension $\mathrm{dim}\Zas_d = 2|d| = (2\rho,d)$.

A key property of the zastava space is that it stratifies into smaller such spaces.  Let $\Zas_d^\circ = \Zas_d \cap \Hom_d(\PP^1,G/B)$ be the open set of based maps.  Then
\[
  \Zas_d = \coprod_{0\leq d' \leq d} \Zas_{d'}^\circ \times \Sym^{d-d'}\AA^1,
\]
where for $e\in \check\Lambda_+$ the symmetric product $\Sym^e\AA^1$ is a space of ``colored divisors''.  Concretely, writing $e=e_1\check\alpha_1+\cdots+e_r\check\alpha_r$ with each $e_i\in \mathbb{Z}_{\geq0}$,
\[
  \Sym^e\AA^1 = \prod_{i=1}^r \Sym^{e_i}\AA^1.
\]
For any $d'\leq d$, let $\partial_{d'}\Zas_d\subseteq \Zas_d$ be the closure of the stratum $\Zas_{d-d'}^\circ\times\Sym^{d'}\AA^1$.  (See \cite[\S6]{bf1}.  By convention, let us declare $\partial_{d'}\Zas_d$ to be empty if $d' \not\leq d$.)  In particular, there are divisors $\partial_i\Zas_d := \partial_{\check\alpha_i}\Zas_d$.

We set
\[
  \Delta = \sum_{i=1}^r \partial_i\Zas_d
\]
and consider the pair $(\Zas_d,\Delta)$.  The strata of this pair can be described easily: for any $I\subseteq \{1,\ldots,r\}$, let
\[
 d_I = d - \sum_{i\in I} \check\alpha_i.
\]
Then
\[
  \Delta_I := \bigcap_{i\in I} \partial_i\Zas_d = \partial_{d_I}\Zas_d,
\]
understanding the RHS to be empty if $d_I\not\geq 0$.

Now consider the Kontsevich resolution of quasimaps by the graph space, $\Gamma (G/B)_d \to \QM_d$.  This restricts to an equivariant resolution of the zastava space, which we write as $\phi\colon \tilde\Zas_d \to \Zas_d$.  Let $\tilde\Delta$ be the proper transform of $\Delta$ under $\phi$; this is a simple normal crossings divisor.  Let $\tilde\omega$ and $\omega$ be the canonical sheaves of $\tilde\Zas_d$ and $\Zas_d$, respectively.  Our goal is to show the following:

\begin{proposition}\label{p.rational}
We have
\begin{align*}
  \phi_*\tilde\omega(\tilde\Delta) &= \omega(\Delta), \quad\text{ and }\\
 R^i\phi_*\tilde\omega(\tilde\Delta) &= 0 \qquad\text{ for }i>0.
\end{align*}
In particular, $\phi_*[\tilde\omega(\tilde\Delta)] = [\omega(\Delta)]$ as classes in $K_\circ^{\CC^*\times T}(\Zas_d)$.
\end{proposition}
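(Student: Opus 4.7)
The plan is to interpret the proposition as a log version of Grauert--Riemenschneider vanishing for the resolution $\phi$, and to prove it by combining two ingredients: (i) the Braverman--Finkelberg computation of $\omega_{\Zas_d}$, together with their theorem that $\Zas_d$ has rational Gorenstein singularities; and (ii) an explicit discrepancy computation comparing $\tilde\Delta$ to $\phi^*\Delta$. As a first step I would use the realization of $\tilde\Zas_d$ as a fiber of the graph space $\Gamma(G/B)_d$: adjunction then expresses $\tilde\omega$ in terms of the canonical sheaf of the ambient graph space, and identifies the $\phi$-exceptional prime divisors $E_j$ with the strata of $\Gamma(G/B)_d$ corresponding to stable maps whose source curve has extra bubble components lying over the defect locus of $\QM_d$.

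The technical heart is then to establish a discrepancy identity of the form
\[
K_{\tilde\Zas_d} + \tilde\Delta \;=\; \phi^*(K_{\Zas_d} + \Delta) \;+\; E,
\]
where $E$ is an effective $\phi$-exceptional divisor. Since $\phi$ is an isomorphism over the open stratum $\Zas_d^\circ$, both sides agree there, and one may read off the coefficient of each $E_j$ on either side by computing orders of vanishing at its generic point. Braverman--Finkelberg's formula makes the contribution $b_j$ of $E_j$ to $\phi^*\Delta - \tilde\Delta$ explicit, while the adjunction computation on the graph side supplies the discrepancy $a_j$ of $K_{\tilde\Zas_d} - \phi^*K_{\Zas_d}$; what needs to be checked is the inequality $a_j \geq b_j$ for every $j$, equivalently that $(\Zas_d,\Delta)$ is log canonical. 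This is a local statement along each defect stratum, which I would verify either by invoking the explicit coordinates of \cite{bdf} on an \'etale slice, or by reducing to the toric model $\Sym^e\AA^1$ that governs the transverse behavior of the stratification.

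Given the identity, the two conclusions follow from standard arguments. For the first, $E$ is effective and $\phi$-exceptional and $\Zas_d$ is normal, so $\phi_*\mathcal{O}_{\tilde\Zas_d}(E) = \mathcal{O}_{\Zas_d}$ and the projection formula yields $\phi_*\tilde\omega(\tilde\Delta) = \omega(\Delta)$. For the higher vanishing, one may appeal to relative Kawamata--Viehweg applied to the log smooth pair $(\tilde\Zas_d, \tilde\Delta)$ over $\Zas_d$, or equivalently to Koll\'ar's vanishing theorem for pushforwards of log canonical sheaves under a birational morphism; in either case the hypotheses are exactly that $E$ is effective exceptional and $\tilde\Delta$ is reduced SNC. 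The main obstacle is the discrepancy identity above: once it is in hand, the rest is formal. I expect the subtlety to lie in the bookkeeping of the bubble components contributing to $E$, since a defect at a point of $\PP^1$ can have weight involving several simple coroots, and the matching with the ordered stratification $\partial_{d'}\Zas_d$ has to be carried out with care.
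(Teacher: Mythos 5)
Your overall framing---reduce to a discrepancy statement $K_{\tilde\Zas_d}+\tilde\Delta = \phi^*(K_{\Zas_d}+\Delta)+E$ with $E$ effective, using the Braverman--Finkelberg computation of $\omega_{\Zas_d}(\Delta)$---is close to the first half of the paper's argument: the paper also cites the proof of \cite[Proposition~5.2]{bf2} to say precisely that the relative log canonical divisor of $\phi$ has nonnegative coefficients (and that $\omega(\Delta)$ is Cartier, indeed trivial), and from this concludes that $(\Zas_d,\Delta)$ is dlt. (A small terminological slip: the condition $a_j\ge b_j$ you want is that the pair is \emph{canonical}, not \emph{log canonical}; lc only needs $a_j\ge b_j-1$, and lc alone would not suffice.)

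The genuine gap is in the step you declare ``formal.'' Once the discrepancy identity is in hand, the projection formula does give $\phi_*\tilde\omega(\tilde\Delta)=\omega(\Delta)$, but neither of the vanishing theorems you invoke applies. Relative Kawamata--Viehweg requires an snc boundary with coefficients strictly less than $1$, whereas $\tilde\Delta$ is reduced; you cannot absorb it into the boundary, and replacing it by $(1-\eem)\tilde\Delta$ leaves you needing $\eem\tilde\Delta$ to be $\phi$-nef, which is not clear and certainly not ``formal.'' And the hypotheses of the relevant theorem in Koll\'ar's book are \emph{not} merely ``$E$ effective exceptional and $\tilde\Delta$ reduced snc'': the essential extra hypothesis is that the resolution $\phi\colon(\tilde\Zas_d,\tilde\Delta)\to(\Zas_d,\Delta)$ be \emph{thrifty} in the sense of \cite[Definition~2.79]{kollar}, i.e.\ $\phi$ is an isomorphism over the generic point of every stratum of the snc locus of $(\Zas_d,\Delta)$ and at the generic point of every stratum of $(\tilde\Zas_d,\tilde\Delta)$. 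Thriftiness is what ensures that no exceptional divisor of $\phi$ dominates a stratum of $\Delta$; without it even the $R^0$ statement can fail (blow up a surface at a node of a reduced curve $D$: the exceptional curve has discrepancy $-1$ for the pair, $E$ is \emph{not} effective, and $\phi_*\omega_Y(D_Y)=\omega_X(D)\otimes I_p\neq\omega_X(D)$), and even when $E$ is effective the higher vanishing is exactly what \cite[Proposition~2.84, Theorem~2.87]{kollar} is designed to supply for a dlt pair with a thrifty resolution. The paper's proof devotes the second half of its argument precisely to verifying thriftiness, by identifying the fixed strata of the Kontsevich resolution and checking that the maps of strata $\tilde\Delta_I\to\Delta_I$ are birational and that no other component of $\tilde\Zas_d$ dominates $\Delta_I$. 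Your proposal omits this check entirely, and it cannot be absorbed into the discrepancy computation: nonnegativity of discrepancies of the exceptional divisors that do appear in $\phi$ does not by itself preclude some $E_j$ from dominating a stratum. You need both pieces---dlt-ness (your discrepancy computation) \emph{and} thriftiness---and then you should cite Koll\'ar's rational-resolution machinery rather than KV-type vanishing.
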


\begin{proof}
We use the terminology and results of \cite[\S2.5]{kollar}.  In our context, this is the same as saying that $\phi\colon (\tilde\Zas_d,\tilde\Delta) \to (\Zas_d,\Delta)$ is a {\em rational resolution}.  By \cite[Proposition~2.84 and Theorem~2.87]{kollar}, it suffices to prove that the pair $(\Zas_d,\Delta)$ is {\em dlt} and the resolution $\phi\colon (\tilde\Zas_d,\tilde\Delta)\to (\Zas_d,\Delta)$ is {\em thrifty}.

The fact that $(\Zas_d,\Delta)$ is dlt is essentially proved in \cite{bf1,bf2}.  In fact, the proof of \cite[Proposition~5.2]{bf2} shows that $(\Zas_d,\Delta)$ is a klt pair, since $\omega(\Delta)$ is Cartier (in fact, trivial) and the relative log canonical divisor of the resolution $\phi$ has nonnegative coefficients.  Since klt implies dlt, this suffices (see \cite[Definition~2.8]{kollar}).

The notion of a thrifty resolution $f\colon (Y,D_Y) \to (W,D)$ is defined in \cite[Definition~2.79]{kollar}: this means that $W$ is normal, $D$ is a reduced divisor, $D_Y$ is the proper transform of $D$ and has simple normal crossings, $f$ is an isomorphism over the generic point of every stratum of the snc locus $\mathrm{snc}(W,D)$, and $f$ is an isomorphism at the generic point of every stratum of $(Y,D_Y)$.

The fact that $\phi\colon(\tilde\Zas_d,\tilde\Delta) \to (\Zas_d,\Delta)$ satisfies these conditions is straightforward.  To check it, we review the description of $\phi$, considering its values on strata.  The component $\tilde\partial_i$ is the proper transform of $\partial_i = \partial_i\Zas_d \subseteq \Zas_d$; a general point parametrizes stable maps whose source curve has a vertical component of degree $\check\alpha_i$, attached to a horizontal component of degree $d-\check\alpha_i$ at some point $x\neq \infty$.  By remembering the map $f$ from the horizontal component and the point $x$ where the vertical component is attached, this maps to $(f,x)\in \Zas^{\circ}_{d-\check\alpha_1} \times \AA^1$.

Similarly, suppose $I=\{i_1,\ldots,i_k\}$ indexes a stratum.  A general point of $\tilde\Delta_I = \cap_{i\in I} \tilde\partial_i$ consists of maps from a source curve with vertical components of degrees $\check\alpha_i$, one for each $i\in I$, attached to a horizontal component of degree $d'=d-\sum_{i\in I} \check\alpha_i$ at distinct points $x_{i_1},\ldots,x_{i_k}$.  This maps to $(f, x_{i_1},\ldots,x_{i_k})\in \Zas^\circ_{d'}\times (\AA^1)^k$, as before.  Since the map $\tilde\Zas_{d'}\to \Zas_d$ is birational, so is the map of strata $\tilde\Delta_I \to \Delta_I$.

Finally, no subvariety of $\tilde\Zas_d$ other than $\tilde\Delta_I$ maps onto the stratum $\Delta_I$.  Indeed, $\Delta_I$ is the closure of $\Zas_{d'}\times (\AA^1)^k$, with notation as in the previous paragraph, so a general point will have $k$ distinct coordinates $x_{i_1},\ldots,x_{i_k}$ for the $(\AA^1)^k$ factor.  The only preimage under $\phi$ of such a point is a map $(f,x_{i_1},\ldots,x_{i_k})$ as described above.\footnote{There are other subvarieties of $\tilde\Zas_d$ mapping into $\Delta_I$, but not dominantly.  For instance, there is a divisor $D_{\check\alpha_1+\check\alpha_2} \subseteq\tilde\Zas_d$ where the source curve has a vertical component of degree $\check\alpha_1+\check\alpha_2$ attached at a point $x$ to a horizontal component of degree $d-\check\alpha_1-\check\alpha_2$.  This maps to $\partial_1\cap \partial_2$, but in the stratum $\Zas^\circ_{d-\check\alpha_1-\check\alpha_2}\times (\AA^1)^2$, the image only contains points in the diagonal $\AA^1 = \{(x,x)\} \subseteq (\AA^1)^2$.}
\end{proof}

%%%
\subsection{Asymptotics of the $J$-function}
%%%

A key ingredient in our approach to finiteness is a bound on the growth of the coefficients $J_d$, and more generally $\TT_d$, when considered as rational functions of $q$.  Here we consider $G/B$; the extension to general $G/P$ will be addressed later.

Given any $d\in\check\Lambda_+$, define
\begin{equation}\label{kd}
  m_{d} :=  r(d) + \frac{(d,d)}{2},
\end{equation}
where $r(d)$ is the number of $i$ such that $d_i>0$.

Writing $J = \sum_d Q^d J_d$, each $J_d$ is a rational function in $q$, with coefficients in $K_T(G/B)$.  As $q\to\infty$, then, $J_d$ tends to $c_d\,q^{-\nu_{d}}$, for some element $c_{d}\in K_T(G/B)$ and some integer $\nu_{d}$.

\begin{lemma}\label{l.bound}
We have $\nu_{d} \geq m_{d}$.
\end{lemma}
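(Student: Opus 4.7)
My approach is to reduce the bound on $\nu_d$ to a Hilbert series computation on the zastava space, and then extract the required $q\to\infty$ asymptotics from Cohen--Macaulay duality combined with Proposition~\ref{p.rational}.

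First, I reduce to the Hilbert series of $\Zas_d$. By equation~\eqref{e.Jd} we have $J_d=\eem_{\QM_d^{(d)}}(\QM_d)\in K_T(G/B)\otimes\mathbb{Q}(q)$. Since $\Zas_d$ is a transverse slice of $\QM_d$ through $w_\circ\in G/B\cong\QM_d^{(d)}$, the restriction $J_d|_{w_\circ}$ equals the $(\mathbb{C}^*\times T)$-equivariant multiplicity $\eem_{(d,w_\circ)}(\Zas_d)$, i.e., the Hilbert series $H_d(q,t) := \chi^{\mathbb{C}^*\times T}(\Zas_d,\OO_{\Zas_d})$ at the attractive fixed point. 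The pole order of $J_d$ at $q=\infty$ is at least that of any restriction, so it suffices to bound the pole order of $H_d$.

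Second, I derive a functional equation from Proposition~\ref{p.rational}. The proof of that proposition invokes $\omega_{\Zas_d}(\Delta)\cong\OO_{\Zas_d}$, which as an equivariant isomorphism carries a character twist $\chi_\omega\in R(\mathbb{C}^*\times T)$. Combined with the Cohen--Macaulay property of $\Zas_d$ (implied by rational singularities), the canonical module of $R=\OO(\Zas_d)$ is---up to the twist $\chi_\omega$---the ideal $I_\Delta\subseteq R$. Graded local duality at the attractive fixed point then yields
\begin{equation*}
  \chi_\omega(q,t)\cdot\bigl(H_d(q,t)-H_\Delta(q,t)\bigr) = H_d(q^{-1},t^{-1}),
\end{equation*}
where $H_\Delta$ is the Hilbert series of the coordinate ring of $\Delta$. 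Since $\Zas_d$ is transverse to the $\mathbb{C}^*$-fixed $G/B$, all tangent $q$-weights at $(d,w_\circ)$ are strictly positive, so $H_d(q^{-1},t^{-1})\to 1$ as $q\to+\infty$. Rearranging,
\[
  H_d(q,t)\ \sim\ H_\Delta(q,t)+\chi_\omega(q,t)^{-1}\qquad\text{as } q\to+\infty,
\]
and the pole order of $H_d$ at $q=\infty$ is therefore at least $\min(\kappa,\nu_\Delta)$, where $\kappa$ is the $q$-degree of $\chi_\omega$.

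Third, I compute $\kappa$ and induct on $|d|$ to control $H_\Delta$. The character $\chi_\omega$ is the weight of the trivializing section of $\omega_{\Zas_d}(\Delta)$ at the fixed point; by equivariant localization on the smooth graph-space resolution $\phi\colon\tilde\Zas_d\to\Zas_d$, this character decomposes into contributions from each component $\tilde\partial_i$ of the snc boundary $\tilde\Delta$. A careful accounting of the $q$-weights---one unit from the $\AA^1$-attachment-point coordinate for each active index $i$ (contributing $r(d)$), plus a quadratic contribution governed by the symmetrized Cartan pairing arising from the discrepancies and mutual intersections of the $\tilde\partial_i$---yields $\kappa=r(d)+(d,d)/2=m_d$. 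The $H_\Delta$ term is subsumed by induction on $|d|$ via the stratification $\partial_{d'}\Zas_d\cong\overline{\Zas_{d-d'}\times\Sym^{d'}\AA^1}$, whose pieces admit the inductive hypothesis on smaller zastava.

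The main obstacle is the combinatorial identification in Step~3: matching the Cartan quadratic form $(d,d)/2$ to the discrepancies of the Kontsevich resolution requires a careful equivariant analysis of each boundary component $\tilde\partial_i$ and its intersections, leveraging Braverman--Finkelberg's equivariant computation of $\omega_{\Zas_d}$ rather than its mere line-bundle class. A secondary difficulty is arranging the induction on $H_\Delta$ so that $\chi_\omega^{-1}$ dominates it at each stage; this cross-term estimate is ultimately where an input like the inequality of Appendix~\ref{app:ineq} enters.
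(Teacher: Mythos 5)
Your reduction in Step~1 to the equivariant multiplicity $\eem_0(\Zas_d)$ of the zastava space at its attractive fixed point is correct and matches the paper's reduction, and the idea of exploiting $\omega(\Delta)\cong\OO$ (with a character twist) from Proposition~\ref{p.rational} is also on target. However, the route through graded local duality breaks down at the key numerical step.

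The gap is in your claimed value $\kappa=m_d=r(d)+\tfrac{(d,d)}{2}$ for the $q$-degree of the trivializing character $\chi_\omega$ of $\omega(\Delta)$. By Braverman--Finkelberg (\cite[Prop.~5.2]{bf2}, cited and used in the paper), the $q$-weight of this trivialization is $\tfrac{(d,d)}{2}=m_d-r(d)$, \emph{not} $m_d$. The extra $r(d)$ does not live in $\chi_\omega$. Inserting the correct value into your duality relation
\[
\chi_\omega(q,t)\cdot\bigl(H_d(q,t)-H_\Delta(q,t)\bigr)=\pm\, H_d(q^{-1},t^{-1})\ \xrightarrow{\ q\to\infty\ }\ \text{(finite nonzero limit)},
\]
you get $\ord_{q=\infty}(H_d-H_\Delta)=\tfrac{(d,d)}{2}$ \emph{exactly}, so your $\min$ estimate yields at best $\ord_{q=\infty}(H_d)\ge\min\bigl(\tfrac{(d,d)}{2},\,\ord_\infty H_\Delta\bigr)\le\tfrac{(d,d)}{2}$, which is strictly weaker than $m_d$ whenever $r(d)>0$. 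Worse, the duality identity actually \emph{forces} at least one of $H_d$, $H_\Delta$ to have vanishing order exactly $\tfrac{(d,d)}{2}$; proving $\ord_\infty(H_d)\ge m_d$ by this route would require a genuine cancellation of the leading terms of $H_\Delta$ and $\chi_\omega^{-1}$, which your $\min$-bound cannot see and your induction does not address.

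Your Step~3 ``accounting'' mixes up two distinct contributions. In the paper's argument, the quadratic term $\tfrac{(d,d)}{2}$ is indeed the $q$-weight of $\chi_\omega$, but the $r(d)$ arises from a \emph{localization} computation on the resolution: writing $\mathrm{ch}(\omega(\Delta))=\phi_*\bigl(\tilde\omega(\tilde\Delta)|_{\mathcal{F}}/\lambda_{-1}(N^*_{\mathcal{F}/\tilde\Zas_d})\bigr)$, the ratio $\tilde\omega|_{\mathcal{F}}/\lambda_{-1}(N^*)$ has order $0$ at $q=\infty$ (the determinant of the conormal weights cancels against the denominator), and it is the extra factor $\OO(\tilde\Delta)|_{\mathcal{F}}$, a product of $r(d)$ line bundles each carrying a $q^{-1}$, that contributes the $r(d)$. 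That contribution is an artifact of the pushforward from $\tilde\Zas_d$, not of the character of the trivializing section. Finally, the appeal to the inequality of Appendix~\ref{app:ineq} is misplaced here; that inequality is used later (in Lemma~\ref{l.Acom}) and plays no role in establishing Lemma~\ref{l.bound} itself.
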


\begin{proof}
Because $\mathbb{C}^*$ acts trivially on $G/B$, it is enough to compute the asymptotics of the restriction of $J_d$ to any fixed point in $(G/B)^T$; we choose the point $w_\circ$, corresponding to the longest element of the Weyl group.

By Equation~\eqref{e.Jd}, the restriction $J_d|_{w_\circ}$ is equal to the contribution from the fixed point $(d,w_\circ)\in \QM_d^{\mathbb{C}^*\times T}$ appearing in the localization formula for $\chi(\QM_d,\,\OO)$.  The localization formula \eqref{e.fpformula}, applied to the map $\QM_d \to \mathrm{pt}$, says
\[
  \chi(\QM_d,\,\OO) = \sum_{(d^+,w)} \eem_{(d^+,w)}(\QM_d).
\]
So we only need to compute the equivariant multiplicity, or more specifically, its degree as a rational function in $q$.

We may reduce to the zastava space $\Zas_d$; from its description as the fiber over $w_\circ\in G/B$ of the evaluation map $\ev_\infty\colon \QM_d^{\circ} \to G/B$, we see that
\[
\eem_{(d,w_\circ)}(\QM_d) = \left(\prod \frac{1}{1-\ee^{-\alpha}}\right) \cdot \eem_{0}(\Zas_d),
\]
where the product is over positive roots $\alpha$.  In particular, the contribution of $q$ to $\eem_{(d,w_\circ)}(\QM_d)$ comes from $\eem_0(\Zas_d)$, so it is enough to compute the latter.

Let us write
\[
  \eem_0(\Zas_d) = \frac{R(q)}{S(q)}
\]
as a rational function in $q$.  We wish to show
\begin{equation}\label{bd1}
 \deg(R) - \deg(S) \leq -m_{d} = -r(d)-\frac{(d,d)}{2},
\end{equation}
or in other words, the order of the rational function is $\mathrm{ord}_\infty(\eem_0(\Zas_d)) \geq m_{d}$.  This will give the asserted bound.

Using the notation of Proposition~\ref{p.rational}, recall $\omega=\omega_{\Zas_d}$ is the canonical sheaf, and $\Delta\subseteq\Zas_d$ is the boundary divisor.  By the proof of \cite[Proposition~5.2]{bf2}, $\omega(\Delta)$ is a trivial line bundle, with $q$-weight $(d,d)/2=m_d-r(d)$, so
\begin{equation}\label{bd2}
  \mathrm{ch}(\omega(\Delta))  = q^{m_{d}-r(d)}\,\eem_0(\Zas_d).
\end{equation}
We will show that the rational function $\mathrm{ch}(\omega(\Delta))$ has $\mathrm{ord}_\infty(\mathrm{ch}(\omega(\Delta)))\geq r(d)$, which proves Equation~\eqref{bd1} after dividing by $q^{m_{d}-r(d)}$.

To see this, we will compute $\mathrm{ch}(\omega(\Delta))$ by localization, using the Kontsevich resolution and the identity $[\omega(\Delta)]=\phi_*[\tilde\omega(\tilde\Delta)]$ from Proposition~\ref{p.rational}.  Recalling the descriptions of the $\CC^*$-fixed components of $\Gamma (G/B)_d$, one sees that $\tilde\Zas_d$ has a unique fixed component, namely
\[
  \mathcal{F}=\tilde\Zas^{\CC^*}_d=\Gamma (G/B)_d^{(d)}\cap \tilde\Zas_d.
\]
A general point parametrizes based maps where the source curve consists of a horizontal component of degree 0 (mapping to $w_\circ \in G/B$) with a vertical component of degree $d$, attached to the horizontal component at the fixed point $0$.

Now we have
\begin{equation}
\mathrm{ch}(\omega(\Delta)) =  \eem_0(\Zas_d)\cdot [\omega(\Delta)]|_0 =  \phi_*\left( \frac{\tilde\omega(\tilde\Delta)|_\mathcal{F}}{\lambda_{-1}( N^*_{\mathcal{F}/\tilde\Zas_d})} \right).
\end{equation} 
Taking $q$-graded characters, the fraction in the right-hand side has order $r(d)$ at $q=\infty$.  Indeed, the nontrivial characters appearing in $\tilde\omega|_\mathcal{F}$ are precisely those appearing as normal characters in $N_{\mathcal{F}/\tilde\Zas_d}$.  (The tangential directions along $\mathcal{F}$ have trivial character, since $\mathcal{F}$ is fixed.)  Each irreducible component of the divisor $\tilde\Delta$ contributes $q^{-1}$, by the proof of \cite[Lemma~5.2]{bf1}, and there are $r(d)$ such components.  Finally, after pushing forward by $\phi$, we see that the order at $\infty$ of the right-hand side is at least $r(d)$. (Some terms may vanish in the pushforward, so inequality is possible.)
\end{proof}

In the case where $G$ is simply laced---i.e., of type A, D, or E---a similar (but simpler) argument produces a stronger bound.  Let $k_{d} :=  (\rho,d) + \frac{(d,d)}{2}$.

\renewcommand{\thetheorem}{\ref{l.bound}$^+$}
\begin{lemma}\label{l.boundADE}
When $G$ is simply laced, we have $\nu_{d} \geq k_{d}$.
\end{lemma}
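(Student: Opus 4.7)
My plan is to refine the localization argument of Lemma~\ref{l.bound}, producing more factors of $q^{-1}$ in the simply-laced setting. Retaining the identity
\[
 \mathrm{ch}(\omega(\Delta)) = q^{(d,d)/2}\,\eem_0(\Zas_d),
\]
which is insensitive to the type of $G$, the task reduces to showing $\mathrm{ord}_\infty\mathrm{ch}(\omega(\Delta)) \geq (\rho,d) = |d|$ when $G$ is simply laced. The previous argument yielded only $\mathrm{ord}_\infty\mathrm{ch}(\omega(\Delta))\geq r(d)$ by counting the $r(d)$ irreducible components of the proper transform $\tilde\Delta$ passing through the fixed component $\mathcal{F}$, so what is needed is $(\rho,d) - r(d) = \sum_{d_i>0}(d_i-1)$ additional powers of $q^{-1}$.

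The proposal is to replace the Kontsevich model $\tilde\Zas_d$ by a finer equivariant log resolution $\hat\Zas_d \to \tilde\Zas_d \to \Zas_d$ in which each vertical bubble of total degree $k\check\alpha_i$ is further resolved into a chain of $k$ elementary bubbles of degree $\check\alpha_i$. Across all $i$, the deepest $\mathbb{C}^*$-fixed stratum $\hat{\mathcal{F}}\subset\hat\Zas_d$ parametrizes source curves whose vertical part has fully bubbled into $|d|$ irreducible components, and the corresponding boundary divisor $\hat\Delta$ has $|d|$ components passing through $\hat{\mathcal{F}}$. In the simply-laced case, the factorization structure of zastava spaces \cite{bf1} implies that the iterated blowup centers are smooth and that $\hat\Zas_d\to\Zas_d$ satisfies the dlt-and-thrifty hypotheses of \cite[\S2.5]{kollar}, so the analog of Proposition~\ref{p.rational} gives $\hat\phi_*[\hat\omega(\hat\Delta)] = [\omega(\Delta)]$. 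Then the localization formula \eqref{e.fpformula} expresses
\[
 \mathrm{ch}(\omega(\Delta)) = \hat\phi_*\!\left(\frac{\hat\omega(\hat\Delta)|_{\hat{\mathcal{F}}}}{\lambda_{-1}(N^*_{\hat{\mathcal{F}}/\hat\Zas_d})}\right),
\]
and each of the $|d|$ components of $\hat\Delta$ through $\hat{\mathcal{F}}$ contributes an additional $q^{-1}$ by the adjunction computation of \cite[Lemma~5.2]{bf1}, since the corresponding deformation characters all have strictly negative $q$-weight.

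Summing the contributions yields $\mathrm{ord}_\infty\mathrm{ch}(\omega(\Delta)) \geq |d| = (\rho,d)$, and combined with the $q^{(d,d)/2}$ scaling this gives $\mathrm{ord}_\infty\eem_0(\Zas_d) \geq k_d$, hence $\nu_d \geq k_d$. The principal obstacle is verifying that the iterated bubble resolution is a rational log resolution with the required thrifty property — essentially the analog of Proposition~\ref{p.rational} for $\hat\Zas_d$. This is where the simply-laced hypothesis genuinely enters: the uniform value $(\check\alpha_i,\check\alpha_i) = 2$ and the absence of asymmetric root-length constraints make the blowup centers smooth at each step, whereas in non-simply-laced types the stacking of bubbles is obstructed by the asymmetry of the Cartan pairing and only the cruder bound of Lemma~\ref{l.bound} survives.
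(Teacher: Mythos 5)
Your proposal misses the key simplification that the simply-laced hypothesis actually buys, and its proposed mechanism is likely false.

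The paper's argument for the simply-laced case is \emph{simpler} than the general one, not more elaborate. In types A, D, E, two facts from \cite{bf1} are available that fail in general: (i) $\Zas_d$ has rational singularities (not merely that the pair $(\Zas_d,\Delta)$ is klt), and (ii) the canonical sheaf $\omega$ itself --- not $\omega(\Delta)$ --- is a trivial line bundle with $q$-character $q^{(\rho,d)+(d,d)/2}=q^{k_d}$. One then pushes forward $[\tilde\omega]$ rather than $[\tilde\omega(\tilde\Delta)]$: rational singularities give $\phi_*[\tilde\omega]=[\omega]$ directly, and the fraction $\tilde\omega|_\mathcal{F}/\lambda_{-1}(N^*_{\mathcal{F}/\tilde\Zas_d})$ has order \emph{exactly zero} at $q=\infty$ by the exact cancellation of $q$-characters between the conormal and canonical restrictions. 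No boundary divisor accounting is needed at all; the bound $\mathrm{ord}_\infty(\mathrm{ch}(\omega))\geq 0$, divided by $q^{k_d}$, gives $\nu_d\geq k_d$. Your proposal instead keeps $\omega(\Delta)$ in play and tries to manufacture $|d|-r(d)$ extra powers of $q^{-1}$ from a deeper resolution, which is both unnecessary and unjustified.

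There are two concrete gaps in the proposed mechanism. First, the ``finer equivariant log resolution'' $\hat\Zas_d$ with bubble chains is not constructed, and there is no argument that it is a thrifty resolution of the dlt pair $(\Zas_d,\Delta)$ --- this is not a small detail, since the analog of Proposition~\ref{p.rational} is exactly the technical heart of the argument, and your reason for why it works only in simply-laced types (smoothness of blowup centers via uniform root lengths) is a conjecture with no support in the sources cited. Second, and more seriously, the claim that each of the $|d|$ chain components contributes $q^{-1}$ appears to be false: the $q^{-1}$ weight in the Kontsevich localization arises from the node where the vertical bubble meets the \emph{horizontal} component of $\PP^1$-degree one, which is the only place the source $\mathbb{C}^*$-action acts nontrivially. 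Nodes between consecutive vertical bubbles sit over the fixed point $0\in\PP^1$, and the corresponding smoothing directions carry trivial $q$-character; they would contribute no additional factor of $q^{-1}$. So even if the finer resolution existed with the desired rationality properties, your count of $|d|$ rather than $r(d)$ contributing divisors would not go through.
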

\addtocounter{theorem}{-1}
\renewcommand{\thetheorem}{\arabic{theorem}}

\begin{proof}
The argument is exactly as before, with the following changes.  First, we have that $\omega$ itself is a trivial line bundle with character $q^{(\rho,d)+(d,d)/2}$, as in the proof of \cite[Lemma~5.2]{bf1}, so that
\[
  \mathrm{ch}(\omega) = q^{k_{d}}\,\eem_0(\Zas_d).
\]
Next, we have $\phi_*[\tilde\omega] = [\omega]$ using the fact that $\Zas_d$ has rational singularities \cite[Proposition~5.1]{bf1}.  Finally, the fraction
\[
  \frac{\tilde\omega|_\mathcal{F}}{\lambda_{-1}(N^*_{\mathcal{F}/\tilde\Zas_d})}
\]
has order $0$ at infinity, so pushing forward by $\phi$ shows that $\mathrm{ord}_\infty(\mathrm{ch}(\omega))\geq 0$.  Dividing by $q^{k_{d}}$ yields the bound.
\end{proof}

\begin{remark*}
In type A, the exponent is
\[
 k_{d} = d_1+\cdots+ d_r  + \sum_{i=1}^{r+1}\frac{(d_i-d_{i-1})^2}{2} ,
\]
where $d_0=d_{r+1}=0$, which agrees with \cite[Eq.~(7)]{gl}.
\end{remark*}

\begin{remark*}
For any smooth projective variety $X$, using the characterization of $J_d$ as
\[
J_d = \ev_*\left( \frac{1}{(1-q)(1-qL)} \right),
\]
where $\ev\colon \Mbar_{0,1}(X,d) \to X$ is the evaluation, one can interpret $\nu_d$ as the minimal integer $\geq 2$ such that $\ev_*(L^{-\nu_d+1})\neq 0$ in $K_T(X)$.  Indeed, one expands this pushforward in powers of $q^{-1}$ as
\[
 q^{-2} (1 + q^{-1} + q^{-2} + \cdots ) \ev_*\left( L^{-1}( 1 + q^{-1} L^{-1} + q^{-2} L^{-2} + \cdots ) \right).
\]
A similar characterization of the order of $\TT_d$ at $q=\infty$ will be useful below.
\end{remark*}

%%%
\subsection{Comparison between the Borel and parabolic cases}\label{subsec:parabolic}
%%%

We compare the vanishing orders (at $q=\infty$) of $\TT$ for $G/B$ and $G/P$.  Our main tool is a construction due to Woodward, in the course of his proof of the Peterson-Woodward comparison formula relating quantum cohomology of $G/P$ to that of $G/B$ \cite{w}.  

Given any $d_P\geq 0$ in $\check\Lambda^P$, the Peterson-Woodward formula produces another parabolic $P'$, with $P\supseteq P'\supseteq B$, together with canonical lifts $d_{P'} \in \check\Lambda^{P'}_+$ and $d_B \in \check\Lambda_+$ of $d_P$.  There are natural morphisms
\begin{align*}
h_{P'/B}\colon \Gamma (G/B)_{n,d_B} & \to \Gamma (G/P')_{n,d_{P'}} \times_{G/P'} G/B \\
\intertext{and}
h_{P/P'}\colon \Gamma (G/P')_{n,d_{P'}} & \to \Gamma (G/P)_{n,d_P},
\end{align*}
where $\Gamma (G/B)_{n,d_B}  \to \Gamma (G/P')_{n,d_{P'}}$ and $\Gamma (G/P')_{n,d_{P'}}  \to \Gamma (G/P)_{n,d_P}$ come from functoriality of the Kontsevich space, and $\Gamma (G/B)_{n,d_B} \to G/B$ and $\Gamma (G/P')_{n,d_{P'}} \to G/P'$ are given by evaluation at $0\in \PP^1$.  (This makes sense, since any source curve in the graph space has a distinguished component together with a fixed isomorphism to $\PP^1$.)

Woodward shows that these morphisms are birational.  More precisely, \cite[Theorem~3]{w} asserts that the corresponding maps between $\Hom$ spaces are birational, and these are dense open sets in our graph spaces.

Explicit formulas for $d_B$ and $P'$ can be found in \cite[Remark~10.17]{ls}, but for our purposes it is enough to know that $d_B$ and $d_{P'}$ map to $d_P$ under the canonical projection, and that the above birational morphisms exist.

Consider $d_P\geq 0$ in $\check\Lambda^P$, and let us define $\nu_{d_P}$ as for the $G/B$ case: it is the exponent so that $J_{d_P}$ tends to $c_{d_P}\,q^{-\nu_{d_{P}}}$ as $q\to\infty$, for some $c_{d_P}\in K_T(G/P)$.  In other words, $\nu_{d_P} = \mathrm{ord}_{\infty}(J_{d_P})$.

In addition to the Peterson-Woodward lift $d_B$ of a degree $d_P\in \check\Lambda^P_+$, there is another canonical lift, which we call the {\em minimal} lift $d_B^{\rm min} \in \check\Lambda_+$.  This is (unique) smallest effective lift of $d_P$.  Explicitly, write $d_P = \sum c_i \bar{\alpha}_i$, where the sum is over $i\not\in I_P$, each $c_i\geq 0$, and $\bar{\alpha}_i$ is the image of $\check\alpha_i$ in $\check\Lambda^P$.  Then $d_B^{\rm min} = \sum c_i \check\alpha_i$.

Here is the main lemma of this section.
\begin{lemma}\label{l.boundP}
For any $\xi\in K_T(G/P)$, we have
\[
 \ord_{q=\infty}\TT_{d_P}(\xi) \geq \min_{d_B^{\rm min} \leq d_B^+ \leq d_B} \{ \ord_{q=\infty}\TT_{d_B^{+}}(\pi^*\xi) \},
\] 
where $\pi\colon G/B \to G/P$ is the projection.  
In particular, taking $\xi=1$, we have  $\nu_{d_P} \geq \min_{d_B^{\rm min} \leq d_B^+ \leq d_B}\{ m_{d_B^+} \}$.
\end{lemma}

\begin{proof}
When $\xi=1$, the displayed inequality is precisely $\nu_{d_P}\geq  \min_{d_B^{\rm min} \leq d_B^+ \leq d_B}  \{ \nu_{d_B^{+}}\}$, so the second statement follows from Lemma~\ref{l.bound}.

To verify $\ord_{q=\infty}\TT_{d_P}(\xi) \geq \ord_{q=\infty}\TT_{d_B^{\rm min}}(\pi^*\xi)$, we use the characterization $\TT_d(\xi) = (\ev_1)_*\left( \frac{\ev_2^*\xi}{1-qL_1} \right)$ from Equation~\eqref{e.Td}, where $\ev_i\colon \Mbar_{0,2}(X,d) \to X$ are the evaluation maps.  Let
\[
 h \colon \Gamma(G/B)_{n,d_B} \to \Gamma(G/P)_{n,d_P}
\]
be the composition of $h_{P'/B}$, the projection on the first factor, and $h_{P/P'}$.  The $\CC^*$-fixed loci of $\Gamma(G/B)_{n,d_B}$ which map to the fixed component $\Gamma(G/P)_{1,d_P}^{(1,d_P)}$ are the components $\Gamma(G/B)_{1,d_B}^{(1,d_B^+)}$ such that $d_B^{\rm min} \leq d_B^+ \leq d_B$.  Recall the identifications of fixed loci
\begin{align*}
 \Gamma(G/B)_{1,d_B}^{(1,d^+_B)} &\isom  \Mbar_{0,2}(G/B,d^+_B) \times_{G/B} \Mbar_{0,1}(G/B,d_B^-) \quad \text{ and } \\
 \Gamma(G/P)_{1,d_P}^{(1,d_P)} &\isom  \Mbar_{0,2}(G/P,d_P),
\end{align*}
where in the fiber product both maps to $G/B$ are by $\ev_1$.  We have commutative diagrams
\[
\begin{tikzcd}
G/B  \ar{d}{\pi} & \Mbar_{0,2}(G/B,d_B^+) \times_{G/B} \Mbar_{0,1}(G/B,d_B^-) \ar{d}{\bar{h}^{d_B^+}} \ar{l}{\ev_1} \ar[hook]{r}{\iota}  &  \Gamma(G/B)_{1,d_B} \ar{r}{\ev} \ar{d}{h} & G/B \ar{d}{\pi} \\
G/P & \Mbar_{0,2}(G/P,d_P)  \ar{l}{\ev_1}   \ar[hook]{r}{\iota}  &  \Gamma(G/P)_{1,d_P}    \ar{r}{\ev} & G/P
\end{tikzcd}
\]
for each such $d_B^+$, where $d_B=d_B^+ + d_B^-$.  
In the bottom row, the composition $\ev\circ \iota$ is equal to $\ev_2\colon \Mbar_{0,2}(G/P,d_P)\to G/P$, and similarly in the top row (when one also composes with the projection on the first factor).  Since $h$ is the composition of birational morphisms between varieties with rational singularities and a smooth projection with rational fibers, we have $h_*h^*(z) = z$ for any $z\in K_T(\Gamma(G/P)_{1,d_P}  )$.  Furthermore, by the localization formula (\ref{e.fpformula}) applied to $\bar{h}$, for any $\alpha\in K_T(\Gamma(G/B)_{1,d_B})$ we have
\begin{align*}
  \frac{\iota^*h_*(\alpha)}{(1-q)(1-qL_1^P)} &=  \bar{h}^{d_B}_*\left( \frac{\iota^*\alpha}{(1-q)(1-qL_1)} \right) \\
  &\quad + \sum_{d_B^{\rm min} \leq d_B^+ < d_B} \bar{h}^{d_B^+}_*\left( \frac{\iota^*\alpha}{(1-q)(1-qL_1)(1-q^{-1})(1-q^{-1}L')} \right) .
\end{align*}
Here $L_1^P$ is the cotangent line bundle at the first marked point of $\Mbar_{0,2}(G/P,d_P)$, and $L_1$ and $L'$ are the pullbacks of cotangent line bundles on $\Mbar_{0,2}(G/B,d^+_B)$ and $\Mbar_{0,1}(G/B,d^-_B)$, respectively.  (The denominators are the K-theoretic top Chern classes of the normal bundles to the respective fixed loci, see e.g. \cite[\S2.6]{imt}.)

Now we set $\alpha = \ev^*\pi^*\xi = h^*\ev^*\xi$ in the above equation and apply $(\ev_1)_*$ to both sides.  On the left-hand side, we obtain
\[
  (\ev_1)_*\left( \frac{\iota^*h_*h^*\ev^*\xi}{(1-q)(1-q L^P_1)} \right) = (\ev_1)_*\left( \frac{\ev_2^*\xi}{(1-q)(1-q L^P_1)} \right) = \frac{1}{1-q}T_{d_P}(\xi).
\]
For the first term on the right-hand side, we compute
\begin{align*}
   (\ev_1)_*\bar{h}^{d_B}_*\left( \frac{\iota^*h^*\ev^*\xi}{(1-q)(1-q L_1)} \right)
   &= \pi_*(\ev_1)_*\left( \frac{\iota^*\ev^*\pi^*\xi}{(1-q)(1-q L_1)} \right) \\
    &= \pi_*(\ev_1)_*\left( \frac{\ev_2^*\pi^*\xi}{(1-q)(1-q L_1)} \right) \\
   & = \frac{1}{1-q}\pi_*\TT_{d_B}(\pi^*\xi),
\end{align*}
so this term vanishes to order at least $\ord_{q=\infty}\TT_{d_B}(\pi^*\xi)$.

The other terms are similar.  Writing $\mathrm{pr}\colon \Mbar_{0,2}(G/B,d_B^+) \times_{G/B} \Mbar_{0,1}(G/B,d_B^-) \to \Mbar_{0,1}(G/B,d_B^-)$ for the second projection, we have
\begin{align*}
  & (\ev_1)_*\bar{h}^{d_B^+}_*\left( \frac{\iota^*h^*\ev^*\xi}{(1-q)(1-q L_1)(1-q^{-1})(1-q^{-1}L')} \right) \\
   &= \frac{1}{(1-q)(1-q^{-1})} \pi_*(\ev_1)_*\left( \mathrm{pr}_*\left(\frac{\iota^*\ev^*\pi^*\xi}{1-q L_1}\right) \cdot \frac{1}{1-q^{-1}L'}  \right) \\
    &= \frac{q^{-2}}{(1-q^{-1})^2} \pi_*(\ev_1)_*\left(\mathrm{pr}_*\left( \ev_2^*\pi^*\xi \cdot L_1^{-1}(1+q^{-1}L_1^{-1} + q^{-2}L_1^{-2} + \cdots) \right) \cdot \frac{1}{1-q^{-1}L'} \right).
\end{align*}
The factor
\[
  \frac{q^{-2}}{(1-q^{-1})^2}\mathrm{pr}_*\left( \ev_2^*\pi^*\xi \cdot L_1^{-1}(1+q^{-1}L_1^{-1} + q^{-2}L_1^{-2} + \cdots) \right)
\]
has vanishing order equal to $\ord_{q=\infty}\TT_{d^+_B}(\pi^*\xi)$, since $\mathrm{pr}$ is a flat pullback of the evaluation map $\ev_1\colon \Mbar_{0,2}(G/B,d_B^+) \to G/B$ which computes $\TT_{d_B^+}$.  So the whole term vanishes at least to order $\ord_{q=\infty}\TT_{d^+_B}(\pi^*\xi)$.  Our claim follows.
\end{proof}

When $G$ is simply laced, the same argument produces a sharper bound:
\renewcommand{\thetheorem}{\ref{l.boundP}$^+$}
\begin{lemma}\label{l.boundPADE}
If $G$ is simply laced, we have $\nu_{d_P} \geq \min_{d_B^{\rm min} \leq d_B^+ \leq d_B}\{ k_{d_B^+} \}$. \qed
\end{lemma}
\addtocounter{theorem}{-1}
\renewcommand{\thetheorem}{\arabic{theorem}}

\begin{remark*}
For degrees $d_P$ such that $d_B=d_B^{\rm min}$, the same argument shows that
\[
  \TT_{d_P}(\xi) = \pi_*\TT_{d_B}(\pi^*\xi),
\]
since in this case there is only one term in the localization formula.
\end{remark*}

%%%
\section{The operator $\Acom{i}$}\label{sec:Acom}
%%%

For a partial flag variety $G/P$ and a degree $d=d_P$, we write $\hat{d}$ for an associated degree on $G/B$ which lies in the interval between $d_B^{\rm min}$ and $d_B$, and achieves the minimum of $m_{d_B^+}$ among degrees $d_B^+$ in this interval, as in as in \S\ref{subsec:parabolic}.  That is,
\[
m_{\hat{d}} = \min_{d_B^{\rm min} \leq d_B^+ \leq d_B} \{ m_{d_B^+} \},
\]
and by Lemma~\ref{l.boundP}, we have $\nu_{d} \geq m_{\hat{d}}$.

As discussed in \S\ref{subsec:J},  certain operators $\Aa_i\in \mathrm{End}_{R(T)}(K_T(G/P))\otimes \mathbb{Q}[q][[Q]]$, defined and studied in \cite{imt},  give the $\mathsf{D}_q$-module structure of quantum K-theory.

 Setting $q=1$ in $\Aa_i$ produces operators $\Acom{i} := \Aa_i|_{q=1}\in \mathrm{End}(K_T(G/P))\otimes \mathbb{Q}[[Q]]$.  By Equation~\eqref{dq2} and \cite[Proposition~2.12]{imt}, we have
\begin{equation}\label{dq3}
\prod_{i\in I} \Acom{i} (1) =  a_I|_{q=1}
\end{equation}

\begin{lemma}\label{l.Acom}
The operator $\Acom{i}$ is the operator of the (small) quantum product by $P_i$.
\end{lemma}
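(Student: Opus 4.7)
The plan is to start from equation~\eqref{dq1} with a single monomial, derive an operator identity relating $\TT$ and $\Aa_i$, and then specialize to $q=1$ to read off the quantum product.

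Taking $F(x_1,\ldots,x_r)=x_i$ in~\eqref{dq1} yields $\qshift{i}\tilde{\TT}(1)=\tilde{\TT}(\Aa_i(1))$. As explained in the text just after~\eqref{dq1}, this lifts to an operator identity $\qshift{i}\circ\tilde{\TT}=\tilde{\TT}\circ\Aa_i\circ\qshift{i}$ on $K_T(X)$-valued power series in $Q$. Substituting $\tilde{\TT}=P^{\log Q/\log q}\TT$ and noting that $\qshift{i}$ acts as multiplication by $P_i$ on $P_i^{\log Q_i/\log q}$ (and trivially on $P_j^{\log Q_j/\log q}$ for $j\ne i$), the common prefactor cancels and I obtain the cleaner identity
\[
P_i\cdot(\qshift{i}\circ\TT)\;=\;\TT\circ\Aa_i.
\]

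Next I specialize at $q=1$. Since $\Aa_i$ is polynomial in $q$, the operator $\Acom{i}=\Aa_i|_{q=1}$ is well-defined, and $\qshift{i}|_{q=1}=\mathrm{id}$. A priori each coefficient $\TT_d$ is rational in $q$ with potential poles at $q=1$, but the polynomiality of $\Aa_i$ together with the identity above forces these poles to cancel order by order in $Q$, yielding a well-defined power series $\TT|_{q=1}\in\mathrm{End}(K_T(X))\otimes\mathbb{Q}[[Q]]$ with $\TT|_{q=1}=\mathrm{id}$ at $Q=0$, hence invertible as a power series in $Q$. The identity specializes to
\[
P_i\cdot\TT|_{q=1}\;=\;\TT|_{q=1}\circ\Acom{i},\qquad\text{i.e.,}\qquad\Acom{i}\;=\;\TT|_{q=1}^{-1}\circ(P_i\,\cdot)\circ\TT|_{q=1}.
\]

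It remains to identify this conjugate with small quantum multiplication by $P_i$. The relevant property, built into the construction of $\TT$ and the operators $\Aa_i$ in \cite{imt} (see especially \cite[Prop.~2.10 and Rem.~2.11]{imt}), is that $\TT|_{q=1}$ intertwines quantum multiplication by a line-bundle class with ordinary multiplication: concretely, $P_i\cdot\TT|_{q=1}(\xi)=\TT|_{q=1}(P_i\star\xi)$ for every $\xi\in K_T(X)\otimes\mathbb{Q}[[Q]]$. Combined with the displayed identity, this gives $\Acom{i}(\xi)=P_i\star\xi$. The main obstacles are (a) justifying the cancellation of $q=1$ poles in $\TT_d$ that produces a regular specialization, and (b) establishing the intertwining property in the last step, which rests on the K-theoretic divisor/string equations applied to the two-point integrals on $\Mbar_{0,2}(X,d)$ that define $\TT$.
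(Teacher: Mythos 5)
Your proof is circular at the decisive step.  Up through the operator identity
\[
P_i\cdot\TT|_{q=1}\;=\;\TT|_{q=1}\circ\Acom{i}
\]
the manipulations are sound (modulo the regularity of $\TT$ at $q=1$, which the paper handles by citing \cite[Proposition~2.12]{imt}).  But in the final step you invoke, as if it were known from \cite{imt}, the intertwining identity $P_i\cdot\TT|_{q=1}(\xi)=\TT|_{q=1}(P_i\star\xi)$.  What \cite[Proposition~2.10]{imt} actually gives, and what the paper records as Equation~\eqref{acom1}, is only the weaker statement that $\Acom{i}$ is quantum multiplication by the class $\Acom{i}(1)=P_i+\sum_{d>0}c_{d,i}Q^d$, with a priori nonzero corrections $c_{d,i}$.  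Combining this with your operator identity gives $P_i\cdot\TT|_{q=1}(\xi)=\TT|_{q=1}\bigl(\Acom{i}(1)\star\xi\bigr)$, which is \emph{not} the intertwining with $P_i$ you assert.  To upgrade $\Acom{i}(1)$ to $P_i$ is exactly the content of the lemma, so invoking that intertwining property assumes the conclusion.

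The genuine input you are missing is an analytic bound showing that the higher $Q$-degree contributions to $\Acom{i}(1)$ vanish.  The paper establishes this by computing $\Acom{i}(1)$ from the $q^{\geq 0}$ coefficients of $\qshift{i}\tilde{J}$, and then using the asymptotic bound $\nu_d\geq m_{\hat d}$ of Lemma~\ref{l.boundP} (derived from the zastava-space geometry) together with the root-lattice inequality of Appendix~\ref{app:ineq} (and Lemma~\ref{l.boundPADE} for $\mathrm{E}_8$) to show that every $d>0$ term of $\qshift{i}\tilde{J}$ lands in strictly negative powers of $q$, hence contributes nothing.  Your proposal does not engage with these bounds, and without them there is no way to rule out the corrections $c_{d,i}$; so the argument as written does not close.
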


\
\begin{proof}
It suffices to show that $\Acom{i}(1)=P_i$. 
By \cite[Proposition~2.10]{imt}, the operators $\Acom{i}$ act as the (small) quantum product: we have
\begin{equation}\label{acom1}
  \Acom{i}(\Phi) = \left( P_i + \sum_{d>0} c_{d,i} Q^d \right)\star \Phi,
\end{equation}
for some $c_{d,i}\in K_T(G/P)$.  We will prove that $c_{d,i}=0$ for all $d>0$.

Writing $a=\Aa_i\qshift{i}(1)$ and applying Equation~\eqref{dq4}, we obtain
\begin{equation}
 P_i \left( 1 + \sum_{d>0} q^{d_i} J_d Q^d \right)  = \TT(a) = a^{(0)} 
 +\sum_{d>0} a^{(d)} Q^d+\cdots, \label{e.Acom}
 \end{equation}
 where the omitted terms vanish at $q=\infty$.

As in the discussion after Equation~\eqref{dq4}, we compute $\Acom{i}(1) = a|_{q=1}$ by studying the asymptotics of the expansion of the left-hand side of \eqref{e.Acom} at $q=\infty$.  

To prove the lemma, we wish to show $a^{(d)}=0$ for $d>0$.  For this, since we know that $a^{(d)}$ is polynomial in $q$, it suffices to show that there are no $q^{\geq 0}$ coefficients of $Q^d$ on the left-hand side of \eqref{e.Acom}.

Suppose a $d>0$ term contributes to the $q^{\geq 0}$ coefficients---that is, suppose $q^{d_i}J_d$ has non-positive order at $q=\infty$.  This means that $d_i\geq \nu_d$.  Noting that $\hat{d}_i=d_i$ since $\hat{d}=d_B$ is a lift of $d=d_P$, Lemma~\ref{l.boundP} 
gives 
\begin{align}
  0 \leq d_i - \nu_d &\leq d_i - m_{\hat{d}} = \hat{d}_i-m_{\hat{d}} \label{aicom-bound}. 
\end{align}
By the Lemma in Appendix \ref{app:ineq}, when $G$ contains no simple factors of type E$_8$, the right-most term is strictly negative when $d>0$, giving a contradiction. For the E$_8$ case we have the stronger bound of Lemma~\ref{l.boundPADE} which applies to all simply laced types (see Lemma \ref{l.AcomADE} below).  Therefore, no such $d>0$ terms arise, and the lemma is proved. 

\end{proof}

In the simply-laced case, we can say more.

\renewcommand{\thetheorem}{\ref{l.Acom}$^+$}
\begin{lemma}\label{l.AcomADE}
If $G$ is simply laced, then for {\em distinct} $i_1,\ldots,i_l\in \{1,\ldots,r\}$, we have $P_{i_1}\star \cdots\star P_{i_l}=\prod_{k=1}^lP_{i_k}$.  That is, for these elements, the quantum and classical product are the same. \qed
\end{lemma}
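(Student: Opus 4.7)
The plan is to repeat the argument of Lemma~\ref{l.Acom}, replacing the single shift operator by the composite $\qshift{i_1}\cdots\qshift{i_l}$ corresponding to the monomial $F(x_1,\ldots,x_r)=x_{i_1}\cdots x_{i_l}$, and substituting the sharper simply-laced bound of Lemma~\ref{l.boundPADE} for the generic bound of Lemma~\ref{l.boundP}. Since by Lemma~\ref{l.Acom} each $\Acom{i_k}$ is quantum multiplication by $P_{i_k}$, and these operators commute via the commutativity of $\star$, we have
\[F(\Acom{1},\ldots,\Acom{r})(1) = \Acom{i_1}\cdots\Acom{i_l}(1) = P_{i_1}\star\cdots\star P_{i_l}.\]
The goal is therefore to identify this element with the classical product $P_{i_1}\cdots P_{i_l}$.

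By Equation~\eqref{dq3} this amounts to computing the coefficients $f_w|_{q=1}$ in the expansion $F(\Aa_1\qshift{1},\ldots,\Aa_r\qshift{r})(1) = \sum_w f_w\Phi_w$, and by Equation~\eqref{dq2} these can be read off from the $q^{\geq 0}$ part of
\[F(\qshift{1},\ldots,\qshift{r})\tilde J = \qshift{i_1}\cdots\qshift{i_l}\tilde J,\]
where distinctness of the $i_k$ is used to rewrite the composite shift as a single application of $F$. The $d=0$ contribution yields the classical product $P_{i_1}\cdots P_{i_l}$, since each $\qshift{i_k}$ transforms $P_{i_k}^{\log Q_{i_k}/\log q}$ into $P_{i_k}\cdot P_{i_k}^{\log Q_{i_k}/\log q}$.

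It remains to verify that no $d>0$ contributes to the $q^{\geq 0}$ part. The composite shift multiplies $J_d Q^d$ by $q^{d_{i_1}+\cdots+d_{i_l}}$ (again by distinctness), so it suffices to prove the strict inequality $d_{i_1}+\cdots+d_{i_l}<\nu_d$ for all $d>0$. By Lemma~\ref{l.boundPADE}, $\nu_d\geq k_{\hat d}=(\rho,\hat d)+\tfrac{1}{2}(\hat d,\hat d)$, where $\hat d$ is the canonical Peterson--Woodward lift of $d$. Since $d_{i_k}=\hat d_{i_k}$ and the indices are distinct,
\[\sum_{k=1}^{l} d_{i_k} = \sum_{k=1}^{l}\hat d_{i_k} \leq \sum_{i=1}^{r}\hat d_i = (\rho,\hat d) < (\rho,\hat d)+\tfrac{1}{2}(\hat d,\hat d) = k_{\hat d},\]
the strict inequality following from positive-definiteness of the symmetrized Cartan form on $\check\Lambda\otimes\R$, which guarantees $(\hat d,\hat d)>0$ whenever $\hat d>0$.

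No serious obstacle is anticipated beyond this bookkeeping; the essential input is the sharper simply-laced bound of Lemma~\ref{l.boundPADE}, which plays the role that Appendix~\ref{app:ineq} played in the single-index setting. The weaker bound $\nu_d\geq m_{\hat d}$ of Lemma~\ref{l.boundP} would be insufficient once $l\geq 2$, since $\sum_k d_{i_k}$ can exceed $r(\hat d)$; this is precisely why the lemma is restricted to the simply-laced case.
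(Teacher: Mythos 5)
Your proposal is correct and follows essentially the same route as the paper: bound $\sum_k d_{i_k} - \nu_d$ via the simply-laced estimate $\nu_d \geq k_{\hat d}$ of Lemma~\ref{l.boundPADE}, use distinctness of the $i_k$ together with $\hat d_i \geq 0$ to get $\sum_k \hat d_{i_k} \leq (\rho,\hat d)$, and invoke positive-definiteness of $(\;,\;)$ to conclude the $d>0$ contributions vanish. The only cosmetic difference is that the paper writes the inequality as nonnegativity of $(\rho - \sum_k \varpi_{i_k}, \hat d)$ rather than as a subsum bound, but these are literally the same statement.
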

\addtocounter{theorem}{-1}
\renewcommand{\thetheorem}{\arabic{theorem}}
\begin{proof}

It suffices to show that for distinct $i_1,\ldots,i_l\in \{1,\ldots,r\}$, we have
\[
\left(\prod_{k=1}^l \qshift{{i_k}} \right) \tilde{J}= \tilde{\TT}\left(\prod_{k=1}^lP_{i_k}\right).
\]
This follows from the same argument as in the proof of Lemma \ref{l.Acom}. Indeed, the inequality in Equation~\eqref{aicom-bound} can be replaced by
\begin{align*}
  0 \leq \sum_{k=1}^l d_{i_k} - \nu_d &\leq  \sum_{k=1}^l d_{i_k} - k_{\hat{d}}\\
    & = -\left(\rho-\sum_{k=1}^l\varpi_{i_k}, \hat{d}\right) - \frac{(\hat{d},\hat{d})}{2}.
\end{align*}
The quantity $\left(\rho-\sum \varpi_{i_k}, \hat{d}\right)$ is nonnegative, and $\frac{(\hat{d},\hat{d})}{2}$ is strictly positive for $d\neq 0$, since $(\;,\;)$ is an inner product.  This contradicts the inequality, so no term with $d>0$ occurs.
\end{proof}

%%%%%%%%%%%%%%%%%%%%%%%%%%%%%%%%%%%%%%%%%%%%%%%%%%%%%%%%%%%%%%%
\section{Asymptotics of the fundamental solution $\TT$}
%%%%%%%%%%%%%%%%%%%%%%%%%%%%%%%%%%%%%%%%%%%%%%%%%%%%%%%%%%%%%%%

We would like to establish a generalization of Lemma \ref{l.bound} (and Lemma \ref{l.boundADE} in simply-laced cases) to $\mathsf{T}_d$ by further exploring the properties of the zastava spaces.  Alternatively, one may hope to derive such a generalization with the help of reconstruction theorems \cite{imt}, \cite{lp}. The subtleties involved in either approach present formidable technical challenges.

We proceed differently.  Lemmas \ref{l.bound} and \ref{l.boundADE} imply that $J_d$ satisfies a {\em quadratic growth condition} in the sense introduced in Appendix \ref{app:iritani} by H.~Iritani.  More precisely, for any smooth projective variety $X$, a linear operator $\mathsf{T}=\sum \mathsf{T}_d Q^d$ on $K_T(X)$ {\bf satisfies the quadratic growth condition} if there is a positive-definite inner product $(\;,\;)$ on $H_2(X)$, a linear functional $m$ on $H_2(X)$, and a real constant $c$ such that
\[
  \ord_{q=\infty} \mathsf{T}_d \geq \frac{(d,d)}{2} + m(d) + c
\]
for all $d\in H_2(X)$.  
In the appendix, Iritani proves that the quadratic growth condition on the fundamental solution $\TT$ is equivalent to the shift operators $\mathsf{A}_i$ being polynomials in the Novikov variables $Q$.

According to Kato \cite[Corollary~3.3]{k1} (which uses our Lemma \ref{l.Acom}), for $G/B$ the shift operators $\mathsf{A}_i$ are polynomials in Novikov variables $Q$.  Applying Iritani's result (the Proposition of Appendix~\ref{app:iritani}), we obtain:

\begin{lemma}\label{l.T-bound}
For $G/P$, the fundamental solution $\mathsf{T}$ satisfies the quadratic growth condition.
\end{lemma}

\begin{proof}
By Iritani's Proposition and Kato's finiteness result for $G/B$ \cite[Corollary~3.3]{k1}, the operator $\mathsf{T}$ for $G/B$ satisfies the quadratic growth condition.  Using the bounds of Lemma~\ref{l.boundP}, the quadratic growth condition for $G/P$ follows.
\end{proof}

Applying the Proposition of Appendix~\ref{app:iritani} again, it follows that the shift operators $\mathsf{A}_i$ for $G/P$ are also polynomials in $Q$.  We give a direct argument for this last implication in the next section.

Arguing as in the proof of \cite[Lemma 3.3]{imt}, we have the following lemma, which will be used in Section~\ref{sec:finite}.

\begin{lemma}\label{l:T-infty} Consider $\UU\in K_T(G/P)[q][[Q]]$ such that
 $\TT(\UU)=0$ at $q=\infty$. Then $\TT(\UU)=0$.
\end{lemma}
\begin{proof}
Write $\MM:=\TT(\UU)$. Expanding $\MM=\sum_d \MM_d Q^d$, $\TT=\sum_d \TT_d Q^d$, and $\UU=\sum_d \UU_d Q^d$ as series in $Q$, we will show $\MM=0$ by induction with respect to a partial order on effective curve classes $d\in\check\Lambda_+$.  In fact, we will show $\UU_d=0$ for all $d$.

As rational functions in $q$, the coefficients $\TT_d$ and $\UU_d$ have the following properties: $\TT_0=\mathrm{id}$; $\TT_d$ has poles only at roots of unity, is regular at $q=0$ and $q=\infty$, and vanishes at $q=\infty$ for $d>0$; and $\UU_d$ is a polynomial in $q$.  Since $\TT_0=\mathrm{id}$, it follows that $\UU_0=0$.

The product formula expands to give
\[
 \MM_d=\UU_d+ \sum_{\substack{d'+d''=d \\ d', d''> 0}}\TT_{d'}\UU_{d''},
\]
using $\TT_d(\UU_0)=\TT_d(0)=0$.  
By induction, the indexed sum is zero (since all lower terms $\UU_{d''}=0$), i.e., $\MM_d=\UU_d$.  Since $\MM_d$ vanishes at $q=\infty$ for all $d$, but $\UU_d$ is a polynomial in $q$, it follows that $\UU_d=0$ and $\MM=0$.
\end{proof}

%%%%
\section{Finiteness}\label{sec:finite}
%%%%

We will deduce our main finiteness theorem from the following statement for products of the line bundle classes $P_i$.  This argument originally appeared in our preprint \cite[Proposition 5]{act}.

\begin{proposition}\label{p.monomial}
For any indices $i_1,\ldots,i_l$, the (small) quantum product $P_{i_1}\star \cdots\star P_{i_l}$ is a finite linear combination of elements of $K_T(G/P)$ whose coefficients are polynomials in $Q_1,\ldots,Q_r$.
\end{proposition}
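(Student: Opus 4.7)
The plan is to use the $\mathsf{D}_q$-module machinery already employed in the proofs of Lemmas~\ref{l.Acom} and~\ref{l.AcomADE}. By Lemma~\ref{l.Acom}, each $\Acom{i}$ is the operator of quantum multiplication by $P_i$, so
\[
  P_{i_1}\star\cdots\star P_{i_l} \;=\; \Acom{i_1}\cdots\Acom{i_l}(1) \;=\; F(\Acom{1},\ldots,\Acom{r})(1)
\]
for the monomial $F(x_1,\ldots,x_r)=x_{i_1}\cdots x_{i_l}$. By Equation~\eqref{dq3}, this equals $\sum_w f_w|_{q=1}\,\Phi_w$, where the coefficients $f_w\in R(T)[q][[Q]]$ are defined by the expansion $F(\Aa_1\qshift{1},\ldots,\Aa_r\qshift{r})(1)=\sum_w f_w\,\Phi_w$. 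It therefore suffices to show that each $f_w$ has bounded degree in $Q$.

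To extract $f_w$, I would apply Equation~\eqref{dq2}: $F(\qshift{1},\ldots,\qshift{r})\tilde{J}=\sum_w \tilde{\TT}(f_w\,\Phi_w)$. As noted at the end of \S\ref{subsec:J}, since $\tilde{\TT}=P^{\log Q/\log q}(\mathrm{id}+O(q^{-1}))$, bounded $Q$-degree of the $q^{\geq 0}$ coefficients of the LHS forces bounded $Q$-degree on the $f_w$, and hence on their specializations at $q=1$. A direct computation on the LHS gives
\[
  F(\qshift{1},\ldots,\qshift{r})\tilde{J} \;=\; P^{\log Q/\log q}\cdot P_{i_1}\cdots P_{i_l}\cdot \sum_{d\geq 0}q^{\sum_k d_{i_k}}\,Q^d\,J_d,
\]
so a summand $Q^d J_d$ contributes to the $q^{\geq 0}$ part only when $\sum_k d_{i_k}\geq \nu_d$, where $\nu_d=\mathrm{ord}_\infty J_d$.

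By Lemma~\ref{l.boundP}, $\nu_d\geq m_{\hat{d}}=r(\hat{d})+(\hat{d},\hat{d})/2$. Since each $i_k\notin I_P$, the Peterson-Woodward lift $\hat{d}=d_B$ satisfies $d_{i_k}=\hat{d}_{i_k}$, whence $\sum_k d_{i_k}\leq l\cdot |\hat{d}|$ grows linearly in $|\hat{d}|$. On the other hand $(\,,\,)$ is positive definite on $\check\Lambda\otimes\mathbb{R}$, so $(\hat{d},\hat{d})/2\geq C|\hat{d}|^{2}$ for some $C>0$, and $m_{\hat{d}}$ grows quadratically in $|\hat{d}|$. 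Because $|\hat{d}|\geq |d|$, the inequality $m_{\hat{d}}>\sum_k d_{i_k}$ holds for all but finitely many $d\in \check\Lambda^P_+$. Thus only finitely many $Q^d$ survive in the $q^{\geq 0}$ part of the LHS, yielding the required bound on each $f_w$ and hence on $P_{i_1}\star\cdots\star P_{i_l}$.

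The main obstacle is essentially bookkeeping: one must apply the extraction principle from the close of \S\ref{subsec:J}---that bounded $Q$-degree of the $q^{\geq 0}$ part of the LHS of \eqref{dq2} forces bounded $Q$-degree of the $f_w$---to a general monomial $F$, rather than just $F=x_i$ as in Lemma~\ref{l.Acom}. Once this is granted, the finiteness reduces to the elementary observation that a positive-definite quadratic form on $\check\Lambda$ eventually dominates any fixed linear function of $\hat{d}$, with no need for the subtler root-lattice inequality of Appendix~\ref{app:ineq} that was required to handle the borderline case $l=1$.
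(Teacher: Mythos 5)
Your overall strategy matches the paper's, but there is a real gap at the crucial extraction step. You correctly reduce to showing that each $f_w$ (in the expansion $F(\Aa_1\qshift{1},\ldots,\Aa_r\qshift{r})(1)=\sum_w f_w\,\Phi_w$) has bounded $Q$-degree, and you correctly use Lemma~\ref{l.boundP} together with positive definiteness of $(\;,\;)$ to show that only finitely many $Q^d$ appear in the $q^{\geq 0}$ coefficients of $\prod_k\qshift{i_k}\tilde{J}$. That part is fine and agrees with the paper.

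The gap is your assertion that ``since $\tilde{\TT}=P^{\log Q/\log q}(\mathrm{id}+O(q^{-1}))$, bounded $Q$-degree of the $q^{\geq 0}$ coefficients of the LHS forces bounded $Q$-degree on the $f_w$.'' This is not a formal consequence of $\TT=\mathrm{id}+O(q^{-1})$. A priori $f_w\in R(T)[q][[Q]]$, so the $q$-degree of the $Q^d$-coefficient of $f_w$ may grow without bound as $d$ grows; a large positive power $q^n$ in $f_w$ at large $Q$-degree can pair with a $q^{-n}$ tail of some $\TT_{d'}$ to produce a $q^{\geq 0}$ contribution at very large $Q$-degree, or, conversely, the inductive removal of $q^{\geq 0}$ terms from the LHS by subtracting terms $q^nQ^{d'}\tilde{\TT}(\Phi_w)$ keeps reintroducing new $q^{\geq 0}$ terms and need not terminate. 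What you actually need is a quantitative bound on $\ord_{q=\infty}\TT_d$ that grows with $d$ (the quadratic growth condition for $\TT$, Lemma~\ref{l.T-bound}, which the paper obtains from Kato's polynomiality of the shift operators together with Iritani's Proposition in Appendix~\ref{app:iritani}). This guarantees that, for fixed $n$ and $w$, $q^nQ^{d'}\tilde{\TT}(\Phi_w)$ has only finitely many $q^{\geq 0}$ terms, so the inductive removal terminates after finitely many steps and each $f_w$ is genuinely polynomial in $Q$ and $q$. The paper flags in a footnote that this is the only place bounds on $\TT$ are used; your proof omits it entirely.

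Two smaller points. First, after constructing polynomial $f_w$ so that $\prod_k\qshift{i_k}\tilde{J}-\sum_w\tilde{\TT}(f_w\Phi_w)$ vanishes at $q=\infty$, you still owe an argument (the paper does an induction on $\Eff$-degree, using $\TT_0=\mathrm{id}$, regularity of $\TT_d$, and that $\UU_d$ is polynomial in $q$) that this difference is in fact identically zero, so that the constructed $f_w$ really are the coefficients coming from \eqref{dq2}. Second, the $\Phi_w$ you expand against should come from a monomial basis $P^{\lambda(w)}$, which in general only exists after extending scalars from $R(T)$ to $F(T)$ (Lemma~\ref{l.generate}); one must then note that the answer lies in $K_T(X)[[Q]]\cap (F(T)\otimes_{R(T)}K_T(X))[Q]=K_T(X)[Q]$. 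Your final remark about avoiding the root-lattice inequality of Appendix~\ref{app:ineq} is accurate for the finiteness count, but note you still invoke it indirectly through Lemma~\ref{l.Acom}.
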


The statement is similar to the ``only if'' direction of Iritani's Proposition in Appendix~\ref{app:iritani}, but phrased differently.  In our context, because of Lemma~~\ref{l.Acom}, polynomiality of $\Acom{i}$ is equivalent to that of quantum multiplication by $P_i$.

In proving Proposition~\ref{p.monomial}, we will extend scalars from $R(T)$ to $F(T)$, and choose an $F(T)$-basis $\Phi_w = P^{\lambda(w)}$ of line bundles, for some $\lambda(w)\in \Lambda$.  (By Lemma~\ref{l.generate}, $F(T)\otimes_{R(T)} K_T(G/P)$ is generated by line bundles over $F(T)$, so such a monomial basis exists.)  This extension of scalars is harmless, for the following reason.  A priori, we know the quantum product $P_{i_1} \star \cdots \star P_{i_l}$ lies in $K_T(G/P)[[Q]]$.  The argument we give below shows that it lies in $(F(T)\otimes_{R(T)} K_T(G/P))[Q]$.  This proves the claim, because the intersection of the submodules $K_T(G/P)[[Q]]$ and $(F(T)\otimes_{R(T)} K_T(G/P))[Q]$ inside $(F(T)\otimes_{R(T)} K_T(G/P))[[Q]]$ is precisely $K_T(G/P)[Q]$.

\begin{proof}
From Equation~\eqref{dq1}, for $I=(i_1,\ldots,i_l)$ we have
\begin{equation} \label{e.monomial3}
  \left(P^{\log Q/\log q}\right)^{-1}\prod_{k=1}^l \qshift{{i_k}}\tilde{J} = \TT(a_I),
\end{equation}
where $a_I:= \prod_{i\in I} \Aa_i\qshift{i}(1)\in  F(T)[q][[Q]]$ by \cite[Proposition~2.10]{imt}. This can be rewritten as in Equation~\eqref{dq2}, as
\begin{equation}\label{e.monomial2}
 \prod_{k=1}^l P_{i_k} \left(\sum_{d\geq 0} q^{\sum_{k=1}^l d_{i_k}} J_d Q^d \right) =\TT(a_I)=a_I^{(0)}+\sum_{d>0} a_I^{(d)}Q^d +\ldots
\end{equation}
where the omitted terms vanish at $q=\infty$ (since $\TT=\mathrm{id} + O(q^{-1})$).

By Lemma~\ref{l.Acom}, the operator $\Acom{i}$ is the operator of quantum multiplication by $P_i$. Along with Equation~\eqref{dq3} for $I=(i_1,\ldots,i_l)$, we obtain
\begin{equation}\label{P-product}
 P_{i_1}\star \cdots\star P_{i_l}=\prod_{k=1}^l \Acom{{i_k}}(1) =a_I|_{q=1}.
 \end{equation}
Our goal is to show that $a_I|_{q=1}$ is a polynomial in $Q$.

As in the proof of Lemma~\ref{l.Acom}, we begin by showing that only finitely many $Q^d$ appear in the $q^{\geq 0}$ coefficients of the left-hand side of Equation~\eqref{e.monomial2}.  

Note that the first term of the left hand side gives $\prod_{k=1}^l P_{i_k}$. Suppose a $d> 0$ term  contributes to the $q^{\geq 0}$ coefficients on the left-hand side of Equation~\eqref{e.monomial2}, i.e. suppose that $q^{\sum_{k=1}^l d_{i_k}} J_d$ has non-positive order at $q=\infty$. Then applying Lemma~\ref{l.boundP} gives
\begin{align}\label{e.monomial-ineq}
        0\leq \sum_{k=1}^l  d_{i_k} - \nu_d&\leq \sum_{k=1}^l  d_{i_k}-m_{\hat{d}}
=\sum_{k=1}^l  \hat{d}_{i_k}-r(\hat{d}) -\frac{(\hat{d},\hat{d})}{2}.
\end{align}
Here, as in the proof of Lemma~\ref{l.Acom}, $\hat{d}$ is a lift of $d=d_P$ so that $m_{\hat{d}} = \min_{d_B^{\rm min} \leq d_B^+ \leq d_B}\{m_{d_B^+}\}$.  So $\hat{d}_i=d_i$ for $i\not\in I_P$.

There are finitely many possibilities for $d$ which satisfy the inequality \eqref{e.monomial-ineq}.  Indeed, the quadratic form ${(\;,\;)}$ is positive definite, so level sets of 
\[
\left(\sum_{k=1}^l \hat{d}_{i_k}-r(\hat{xd})\right) - \frac{(\hat{d},\hat{d})}{2}
\]
(as a function of $\hat{d}$) are ellipsoids in the vector space $\check\Lambda\otimes\mathbb{R}$.  It follows that the set 
\[
\left\{d=(d_j)_{j\not\in I_P}\Big| \left(\sum_{k=1}^l  \hat{d}_{i_k}-r(\hat{d})\right) - \frac{(\hat{d},\hat{d})}{2}\geq 0 \right\}
\]
is a bounded subset of $\check\Lambda^P\otimes\mathbb{R}$, so it can contain at most finitely many lattice points $d \in \check\Lambda^P_+$.   Therefore the left hand side of Equation~\eqref{e.monomial2} (and hence of Equation~\eqref{e.monomial3}) has finitely many $q^{\geq 0}$ terms.

Since $\TT = \mathrm{id} + O(q^{-1})$, we have
\[
  q^n Q^{d'}{\TT}(\Phi_w) = q^n Q^{d'} \Phi_w + (\text{terms involving }q^{n'}\text{ for }n'<n).
\]
In other words, the expansion of $q^n Q^{d'}{\TT}(\Phi_w)$ has a unique term with maximal power of $q$.  
Ordering the finitely many terms of the left hand side of Equation~\eqref{e.monomial2} according to the exponents of $q$, we may therefore use the elements
\[
  q^n Q^{d'}{\TT}(\Phi_w), \quad \text{ for } n\in \mathbb{Z}_{\geq 0},\; d'\in \check\Lambda^P_+,\; w \in W^P,%\; \Phi_w\in K_T(X),
\]
to inductively remove the $q^{\geq 0}$ terms.

By Lemma~\ref{l.T-bound}, the operator $\TT$ satisfies the quadratic growth condition; it follows that for fixed $n$ and $w$, the element $q^n Q^{d'}{\TT}(\Phi_w)$ has only finitely many $q^{\geq 0}$ terms (essentially by repeating the argument given in the first part of this proof).
So the inductive removal of $q^{\geq0}$ terms ends after finitely many steps.\footnote{We stress that this step is {\em the only} part of our approach that uses bounds for $\mathsf{T}$.}  This means we can find {\em polynomials} $f_w\in F(T)[q,Q]$ so that the expression
\begin{equation}\label{e.claim}
  \MM:=\ \prod_{k=1}^l P_{i_k} \left(\sum_{d\geq 0} q^{\sum_{k=1}^l d_{i_k}} J_d Q^d \right)  -\sum_w {\TT}(f_w \Phi_w)
\end{equation}
vanishes at $q=+\infty$.  

From Equations~\ref{dq1} and ~\ref{dq2}, this is also equal to 
\begin{align*}
\MM=&\left(P^{\log Q/\log q}\right)^{-1}\left( \prod_{k=1}^l \qshift{{i_k}}\tilde{J}-\sum_w \tilde{\TT}(f_w \Phi_w) \right) \\
   &= \TT \left(\left( \prod \Aa_i \, \qshift{{i_k}} \right)(1)- \sum_w f_w\Phi_w \right) \\
   &=: \TT(\UU).
\end{align*}
By Lemma~\ref{l:T-infty}---which holds without change after the extension of scalars from $R(T)$ to $F(T)$---we conclude that $\MM=0$.
\end{proof}

In particular, the proof of Proposition~\ref{p.monomial} gives the following refinement of Equation~\eqref{e.monomial3}:
\[
\prod_{k=1}^l \qshift{{i_k}} \tilde{J}=\sum_w \tilde{\TT}(f_w \Phi_w)
\]
for \emph{polynomials} $f_w\in R(T)[q][Q]$, giving
\[ \prod_{k=1}^l P_{i_1}\star \cdots\star P_{i_l} = \sum_wf_w \Phi_w.\]

We now turn to our main theorem.  We fix an $R(T)$-basis $\{\Phi_w\}$ for $K_T(G/P)$, and recycle the notation to write $\Phi_w = \Phi_w\otimes 1$ for the corresponding $R(T)[[Q]]$-basis of $QK_T(G/P):= K_T(G/P) \otimes \mathbb{Z}[[Q]]$.

\begin{theorem}\label{t.finite}
The structure constants of $QK_T(G/P)$ with respect to the basis $\{\Phi_w\}$ are polynomials: they lie in the polynomial subring $R(T)[Q]$ of $R(T)[[Q]]$.
\end{theorem}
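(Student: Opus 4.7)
The plan is to reduce the general statement to Proposition~\ref{p.monomial}, which already establishes polynomiality for quantum products of the line bundle classes $P_j$. By Lemma~\ref{l.generate}, after extending scalars to $F(T)$, the algebra $F(T)\otimes_{R(T)}K_T(X)$ is generated by the line bundles $P_j$ for $j\not\in I_P$. Consequently, each basis element $\Phi_w$ admits a finite expansion
\[
  \Phi_w=\sum_\alpha c^{w}_\alpha\,P_{j_{\alpha,1}}\cdots P_{j_{\alpha,l_\alpha}},
\]
with coefficients $c^{w}_\alpha\in F(T)$, involving only positive powers of the $P_j$.

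Next, using that the quantum product is $R(T)[[Q]]$-bilinear and associative (so that after extending scalars it is $F(T)[[Q]]$-bilinear and associative on $F(T)\otimes_{R(T)}QK_T(X)$), I would expand
\[
  \Phi_u\star\Phi_v=\sum_{\alpha,\beta}c^u_\alpha c^v_\beta\,\left(P_{j_{\alpha,1}}\star\cdots\star P_{j_{\alpha,l_\alpha}}\star P_{j_{\beta,1}}\star\cdots\star P_{j_{\beta,l_\beta}}\right).
\]
By Proposition~\ref{p.monomial}, each quantum monomial in parentheses is a polynomial in $Q_1,\ldots,Q_r$ with coefficients in $K_T(X)$. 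Hence $\Phi_u\star\Phi_v$ lies in $(F(T)\otimes_{R(T)}K_T(X))[Q]$, and rewriting it in the basis $\{\Phi_w\}$ (regarded as an $F(T)$-basis after extension of scalars) yields $F(T)$-valued structure constants that are polynomial in $Q$.

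The final step is to descend from $F(T)$ back to $R(T)$. The structure constants $N^{w,d}_{u,v}$ are defined a priori as elements of $R(T)[[Q]]$, and the expansion of $\Phi_u\star\Phi_v$ in $\{\Phi_w\}$ is unique inside $(F(T)\otimes_{R(T)}K_T(X))[[Q]]$. Therefore the $N^{w,d}_{u,v}$ lie in the intersection
\[
  R(T)[[Q]]\cap F(T)[Q]=R(T)[Q]
\]
taken inside $F(T)[[Q]]$; this is exactly the intersection identity invoked at the close of the proof of Proposition~\ref{p.monomial}. The substantive analytic content sits inside Proposition~\ref{p.monomial}, so the main obstacle has already been overcome; what remains is essentially bookkeeping, and the only point requiring a moment's care is the legitimacy of the bilinear expansion above, which follows at once from the fact that each expression of $\Phi_w$ in terms of the $P_j$ is a finite sum.
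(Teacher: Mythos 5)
Your overall strategy matches the paper's: extend scalars to $F(T)$, use Lemma~\ref{l.generate} to get line-bundle generators, reduce to Proposition~\ref{p.monomial}, and descend via the intersection $R(T)[[Q]]\cap F(T)[Q]=R(T)[Q]$. However, there is a genuine gap in your bilinearity step. You write the classical expansion $\Phi_w=\sum_\alpha c^{w}_\alpha\,P_{j_{\alpha,1}}\cdots P_{j_{\alpha,l_\alpha}}$ and then expand $\Phi_u\star\Phi_v$ into a sum of quantum monomials $P_{j_{\alpha,1}}\star\cdots\star P_{j_{\beta,l_\beta}}$. Bilinearity of $\star$ only gives
\[
\Phi_u\star\Phi_v=\sum_{\alpha,\beta}c^u_\alpha c^v_\beta\,\bigl(P_{j_{\alpha,1}}\cdots P_{j_{\alpha,l_\alpha}}\bigr)\star\bigl(P_{j_{\beta,1}}\cdots P_{j_{\beta,l_\beta}}\bigr),
\]
where the inner products are \emph{classical} tensor products, not $\star$-products. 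To replace these by iterated $\star$-products---so that Proposition~\ref{p.monomial} applies---you would need to know that $\Phi_u\otimes 1=\sum_\alpha c^u_\alpha\,(P_{j_{\alpha,1}}\star\cdots\star P_{j_{\alpha,l_\alpha}})$ in $QK_T(X)$, which is generally false: the quantum monomial equals the classical one only modulo higher powers of $Q$.

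The paper addresses exactly this point by introducing the ring homomorphism $F(T)[P_1,\ldots,P_r;Q_1,\ldots,Q_r]\to F(T)\otimes_{R(T)}QK_T(X)$, $P_{i_1}\cdots P_{i_k}\mapsto P_{i_1}\star\cdots\star P_{i_k}$, and arguing that its image is the polynomial submodule $F(T)\otimes_{R(T)}K_T(X)\otimes\mathbb{Z}[Q]$. This requires a short leading-term (or $Q$-adic filtration) argument: the constant-in-$Q$ parts of the quantum monomials are the classical monomials and hence span $F(T)\otimes K_T(X)$; since each quantum monomial is a $Q$-polynomial by Proposition~\ref{p.monomial}, one can correct order by order and terminate. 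This produces, for each $\Phi_w$, a \emph{different} polynomial $\varphi_w(P,Q)$ whose image under the quantum-product homomorphism is $\Phi_w\otimes 1$. Only then can one multiply $\varphi_u\varphi_v$ and invoke Proposition~\ref{p.monomial}. Your remark that the ``only point requiring a moment's care'' is finiteness of the expansion of $\Phi_w$ misidentifies the actual issue; the issue is that the classical expansion is not the quantum one, and repairing this requires the surjectivity-of-the-ring-homomorphism argument rather than mere bookkeeping.
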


In particular, taking $\Phi_w$ to be a Schubert basis (of structure sheaves, canonical sheaves, or dual structure sheaves), we see that the quantum product of Schubert classes in $QK_T(G/P)$ is finite.

\begin{proof}
We begin by extending scalars from $R(T)$ to the fraction field $F(T)$ of $R(T)$, as in Proposition~\ref{p.monomial}; the structure constants are automatically in $R(T)[[Q]]$, so to prove they lie in $R(T)[Q]$, it is enough to show they lie in $F(T)[Q]$.

The assignment $P_{i_1} P_{i_2} \cdots P_{i_k}\mapsto P_{i_1}\star P_{i_2}\star \cdots \star P_{i_k}$ defines a ring homomorphism 
\begin{equation}\label{eqn:ring_hom1}
F(T)[P_1,\ldots, P_r; Q_1,\ldots,Q_{r}]\to F(T)\otimes_{R(T)}QK_T(G/P);
\end{equation}
let the kernel be $I$.  The resulting embedding of rings
\[
 F(T)[P_1,\ldots, P_{r}; Q_1,\ldots,Q_{r}]/I\hookrightarrow F(T)\otimes_{R(T)}QK_T(G/P)
\]
corresponds to the natural embedding of modules
\[
  F(T)\otimes_{R(T)}K_T(G/P)\otimes \mathbb{Z}[Q_1,\ldots,Q_r] \hookrightarrow F(T)\otimes_{R(T)}K_T(G/P)\otimes \mathbb{Z}[[Q_1,\ldots,Q_r]].
\]
It follows from Lemma~\ref{l.generate} that each  element $\Phi_w$ of the $R(T)$-basis for $K_T(G/P)$ 
can be written as a polynomial in $P_i$ with coefficients in $F(T)$. Therefore, each element $\Phi_w$ of the corresponding $R(T)[[Q]]$-basis for $QK_T(G/P)$  can be represented as a polynomial $\varphi_w=\varphi_w(P,Q)$ in $F(T)[P_1,\ldots, P_{r}][Q]$

The product of basis elements $\Phi_u\star\Phi_v$ in $QK_T(G/P)$ is given by a product $\varphi_u\, \varphi_v$ of polynomials in $P$ and $Q$, and by Proposition~\ref{p.monomial}, this product is a finite linear combination of classes in $F(T)\otimes_{R(T)}K_T(G/P)$ with coefficients in $\mathbb{Z}[Q]$.
\end{proof}

\appendix
%%%%
\section{An inequality in the coroot lattice}\label{app:ineq}
%%%%

Consider a root system (of finite type) in a real vector space $V$, with simple roots $\alpha_1,\ldots,\alpha_r$ and associated reflection group $W$.  Let $d=\sum_j d_j \alpha_j$ be an element of the root lattice, so the coefficients $d_j$ are integers.  Let $(\;,\;)$ be the $W$-invariant bilinear form on $V$, normalized so that $(\alpha_j,\alpha_j)=2$ for short roots.  Finally, let
\[
r(d) = \#\{j\,|\, d_j\neq 0\}.
\]
The purpose of this appendix is to prove a simple inequality.

\begin{lem*}
Assume that the root system contains no factors of type \rm{E}$_8$.  For any $i\in \{1,\ldots,r\}$, we have
\[
  \frac{(d,d)}{2} + r(d) \geq d_i,
\]
with equality if and only if $d=0$.
\end{lem*}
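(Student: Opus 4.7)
The plan is to reduce the inequality to a positive-definite quadratic in $d_i$ via Cauchy--Schwarz, and then verify a uniform numerical bound by a finite case check across irreducible types.

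If $d = 0$ the inequality is a trivial equality, and if $d \neq 0$ with $d_i \leq 0$ then $\frac{(d,d)}{2} + r(d) \geq 1 > d_i$ by positive definiteness of $(\;,\;)$ and $r(d) \geq 1$. So I may assume $d_i \geq 1$. Let $S = \{j : d_j \neq 0\}$ be the support of $d$. If $S$ is disconnected in the Dynkin diagram, the simple roots $\{\alpha_j\}_{j\in S}$ split into mutually orthogonal groups; $d$ then decomposes as an orthogonal sum, $(d,d)$ and $r(d)$ split additively, and the components of $d$ not containing $i$ contribute a strictly positive term to the LHS. Thus I may assume $S$ is connected, which singles out an irreducible sub-root-system $R_S$ of rank $|S| = r(d)$; since the ambient system has no $E_8$ factor, neither does $R_S$.

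The analytic core is a Cauchy--Schwarz estimate inside $R_S$. Writing $d_i = (\omega_i^\vee, d)$, where $\omega_i^\vee$ denotes the fundamental coweight of $R_S$ dual to $\alpha_i$, I obtain $(d,d) \geq d_i^2/M$ with $M := (\omega_i^\vee, \omega_i^\vee)$. Hence $\frac{(d,d)}{2} + r(d) \geq \frac{d_i^2}{2M} + |S|$, and it suffices to show that $d_i^2 - 2M d_i + 2M|S| > 0$ for all real $d_i$. The discriminant of this quadratic is $4M(M - 2|S|)$, which is strictly negative precisely when $M < 2|S|$.

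The main obstacle, and the only step where the exclusion of $E_8$ enters, is the uniform bound $M < 2|S|$ across irreducible root systems other than $E_8$. I would dispatch this by a finite inspection of the standard tables of $(\omega_i^\vee, \omega_i^\vee)$: for $A_n$ one has $M \leq (n+1)/4$, for $D_n$ one has $M \leq n-2$, and for $B_n$, $C_n$ analogous linear-in-rank bounds hold; for the exceptional cases the relevant maxima are $6$ in $E_6$, $12$ in $E_7$, $6$ in $F_4$, and $2$ in $G_2$, each strictly less than twice the corresponding rank. The bound just barely fails in $E_8$, where $(\omega_4^\vee, \omega_4^\vee) = 30 > 16 = 2 \cdot 8$; this is consistent with the lemma genuinely failing in that case (e.g.\ for $d$ equal to three times the highest root).
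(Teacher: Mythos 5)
Your proof is correct, and it takes a genuinely different route from the paper, though it turns out to be checking the same numerical condition in disguise. The paper introduces an auxiliary variable $z$, forms the quadratic form $Q(d,z)=\tfrac{(d,d)}{2}-d_i z + r z^2$, and shows it is positive definite via the block determinant identity $\det(2A_Q)=2r\,\det(2A_R)-\det(2A_{R(i)})$, then consults a table of $\det(2A_R)$ for each simple type. You instead apply Cauchy--Schwarz with the fundamental coweight $\omega_i^\vee$ (using $d_i=(\omega_i^\vee,d)$) to get $(d,d)\geq d_i^2/M$ with $M=(\omega_i^\vee,\omega_i^\vee)$, and reduce to the scalar discriminant condition $M<2r$. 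By Cramer's rule, $M=((2A_R)^{-1})_{ii}=\det(2A_{R(i)})/\det(2A_R)$, so the paper's positivity of $\det(2A_Q)$ is \emph{exactly} the inequality $2r>M$ --- the two arguments verify the identical bound, with Cauchy--Schwarz playing the role that Schur complement/positive-definiteness plays in the paper. Your version is slightly more elementary (no need to augment the quadratic form with an extra variable) and explains the $E_8$ anomaly transparently as the coweight $\omega_4^\vee$ at the trivalent node being unusually long, with $(\omega_4^\vee,\omega_4^\vee)=30>16$. Your preliminary reductions (to $d_i\geq 1$ and to connected support) and the closing counterexample $d=3\theta$ in $E_8$ all check out; one minor quibble is the phrase ``just barely fails'' for $E_8$, since $30$ is nearly double $16$, but that is only a comment, not part of the argument.
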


\begin{proof}
We may assume $r(d)=r$, i.e., $d$ has full support, since otherwise the problem reduces to a root subsystem.

Let us introduce a new variable $z$, and consider the quadratic form
\[
 Q(d_1,\ldots,d_r,z) = \frac{(d,d)}{2} - d_i z + r z^2.
\]
We will show that $Q$ is positive definite.  The lemma follows, by evaluating at $z=1$.

Let us write $A_Q$ for the symmetric matrix corresponding to $Q$, $A_R$ for the matrix corresponding to $\frac{1}{2}(\;,\;)$, and $A_{R(i)}$ for the matrix of the subsystem obtained by removing $\alpha_i$.  By reordering the roots as needed, we can assume $A_R$ and $A_{R(i)}$ are principal submatrices of $A_Q$, so $2A_Q$ has the form
\[
2A_Q = \left(\begin{array}{ccc|c}  &   &   & 0 \\  & 2A_R &  & \vdots \\  &  &  & -1  \\ \hline 0 & \cdots & -1 & 2r\end{array}\right)
\]
We see
\[
  \det(2 A_Q) = 2r\,\det(2 A_R) - \det( 2 A_{R(i)} ).
\]
To prove that $Q$ is positive definite, it suffices to check this determinant is positive, since we already know $A_R$ is positive definite.  This is easily done with a case-by-case check, using the data in Table~\ref{tab.1}.  
(Cf.~\cite[\S2.4]{h}, noting that our matrices are multiplied by factors corresponding to long roots.)
\begin{table}[ht]
\begin{tabular}{|c||c|c|c|c|c|c|c|c|} \hline
  $R$          &  A$_n$  & B$_n$ & C$_n$  & D$_n$ & E$_6$ & E$_7$ & F$_4$  & G$_2$   \\ \hline 
$\det(2 A_R)$  &  $n+1$  &  $2^n$  &  $4$     &  $4$ & $3$ & $2$  &   $4$     &  $3$ \\ \hline
\end{tabular}
\caption{Determinants for root systems \label{tab.1} }
\end{table}
\end{proof}

\begin{remark*}
In type E$_{8}$, if $i$ corresponds to the vertex of degree $3$ (the ``fork'') in the Dynkin diagram, then the quadratic form $Q$ is not positive definite: in fact, the determinant $\det(2 A_Q)$ is negative in this case.
\end{remark*}

%%%%%%%
\newpage
%%%%%%%
\section{Finiteness and quadratic growth in quantum $K$-theory}
\begin{center} 
	by Hiroshi Iritani\footnote{H.I.~is supported in part by JSPS KAKENHI Grant Number 16K05127, 16H06335, 16H06337 and 17H06127. \\ 
	Department of Mathematics, Kyoto University, Kitashirakawa-Oiwake-cho, Sakyo-ku, Kyoto, 606-8502, Japan  \\ 
	{\em E-mail address}: {\tt iritani@math.kyoto-u.ac.jp} 
	}
\end{center} 
\label{app:iritani}
%%%%%%%
%{Department of Mathematics, Kyoto University, Kitashirakawa-Oiwake-cho, Sakyo-ku, Kyoto,
% 606-8502, Japan}
% 
% {\em Email address:}{\ttfamily iritani@math.kyoto-u.ac.jp}

\setcounter{equation}{0}
\renewcommand{\theequation}{B.\arabic{equation}}

We show that a quadratic growth condition for the zero orders of 
the fundamental solution $\sfT$ at $q=\infty$ 
is equivalent to the finiteness of the $q$-shift connection $\sfA$ 
associated with nef classes. 

Let $X$ be a smooth projective variety. 
Let $K(X)$ be the topological $K$-group with complex coefficients. 
We fix a basis $\{\Phi_\alpha\}$ of $K(X)$. 
Let $g$ denote the pairing on $K(X)$ given by 
$g(E,F) = \chi(E\otimes F)$.  
Let $\{\Phi^\alpha\}$ denote the dual basis with respect to 
the pairing $g$. 
Let $\sfT$ denote the fundamental solution of the quantum 
difference equation, defined by 
\[
\sfT(\Phi_\alpha) = \Phi_\alpha + 
\sum_{\substack{d\in \Eff(X) \\ d\neq 0}} \sum_\beta 
\corr{\Phi_\alpha, \frac{\Phi_\beta}{1-qL}}_{0,2,d} 
Q^d \Phi^\beta.  
\]
where $\Eff(X) \subset H_2(X,\Z)$ denotes the monoid 
generated by effective curves. 
We write $\sfT = \sum_{d\in \Eff(X)} \sfT_d Q^d$ 
with $\sfT_d \in \End(K(X))$.  
We say that \emph{$\sfT$ satisfies the quadratic growth condition}  
when the following holds: 
\begin{align} 
\label{eq:qgc}
\begin{split}  
& \begin{array}{l}  
\text{There exist a positive-definite inner product $(\cdot,\cdot)$ on 
$H_2(X)$, $m\in H^2(X)$} \\ 
\text{and a constant $c\in \R$ such that we have } 
\end{array} \\
& \qquad \qquad \qquad \qquad \quad 
\ord_{q=\infty} \sfT_d \ge \frac{1}{2}(d,d) + m\cdot d + c \\ 
& \begin{array}{l}  
\text{for all $d\in H_2(X)$, where $\ord_{q=\infty}$ is the order 
of zero at $q=\infty$.}  
\end{array} 
\end{split} 
\end{align} 
For a class $P\in K(X)$ of a line bundle,  
we write $p=- c_1(P)\in H^2(X)$ for the \emph{negative} of the 
first Chern class. 
For $p\in H^2(X)$, let $q^{p Q\partial_Q}$ denote the operator acting on 
power series in $Q$ as 
\[
q^{p Q\partial_Q}\left( \sum_{d \in H_2(X)} c_d Q^d \right) 
= \sum_{d\in H_2(X)} c_d q^{p\cdot d} Q^d.  
\]
The $q$-shift connection $\sfA$ associated 
with $P$ (or with $p= -c_1(P)$) is the operator 
\[
\sfA = \sfT^{-1} P q^{p Q\partial_Q}(\sfT)  
\]
where $P$ acts on $K(X)$ by the (classical) tensor product. 
The nontrivial fact is that $\sfA$ lies in the ring $\End(K(X))\otimes 
\C[q,q^{-1}][\![Q]\!]$, i.e.~it is a Laurent polynomial in $q$. 

\begin{prp*}
The fundamental solution 
$\sfT$ satisfies the quadratic growth condition \eqref{eq:qgc} 
if and only if the difference connections $\sfA$ 
associated with nef classes $p=-c_1(P)$ are polynomials in $Q$. 
\end{prp*}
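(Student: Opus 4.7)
The plan is to analyze the equivalence through the defining identity
\[
\sum_{d_1+d_2=d}\sfT_{d_1}\sfA_{d_2} = q^{p\cdot d}\,P\,\sfT_d,
\]
which, together with $\sfT_0=\mathrm{id}$ (so $\sfA_0=P$), yields the recursion relating $\sfT$ and $\sfA$.

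For the forward direction, suppose $\sfT$ satisfies the quadratic growth condition. First I would show that each $\sfA_d$ is a polynomial in $q$, not merely a Laurent polynomial: $\sfT$ and $\sfT^{-1}$ are regular at $q=0$, and for nef $p=-c_1(P)$ with effective $d$ one has $p\cdot d\geq 0$, so $q^{pQ\partial_Q}(\sfT)$ is regular at $q=0$; combined with the given Laurent property this forces $\sfA_d\in\End(K(X))\otimes\C[q]$. The key consequence is that $\ord_{q=\infty}\sfA_d\leq 0$ unless $\sfA_d=0$. Next, using the recursion $\sfA_d = q^{p\cdot d}P\sfT_d - \sum_{d_1>0}\sfT_{d_1}\sfA_{d-d_1}$, I would induct on $d$ to establish $\ord_{q=\infty}\sfA_d\geq\varphi(d)$ for some $\varphi(d)\to+\infty$ as $|d|\to\infty$: the quadratic growth of $\sfT$ controls the explicit first summand, and a carefully chosen inductive bound plus the nef shift $-p\cdot d$ handles the mixed terms. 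Wherever $\varphi(d)>0$ the polynomial-in-$q$ property forces $\sfA_d=0$, giving polynomiality in $Q$.

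For the backward direction, suppose $\sfA$ is polynomial in $Q$ with finite support $S_p\subset H_2(X)$. For $d$ larger than $S_p$ in the effective cone the main identity rearranges to
\[
q^{p\cdot d}\,P\,\sfT_d - \sfT_d\cdot P = \sum_{d_2\in S_p\setminus\{0\}}\sfT_{d-d_2}\,\sfA_{d_2},
\]
yielding the first-order inequality $\ord_{q=\infty}\sfT_d\geq p\cdot d + \min_{d_2\in S_p\setminus\{0\}}\ord_{q=\infty}\sfT_{d-d_2} - C_p$ for some constant $C_p$ coming from the finitely many $\sfA_{d_2}$. Iterating along a ray $d=Nv$ with $v\in S_p\setminus\{0\}$ accumulates the $p\cdot d$ contributions quadratically in $N$. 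Varying the nef class $p$ and the chosen direction $v\in S_p$ to exhaust $\Eff(X)$, I would combine these ray-wise estimates into the quadratic lower bound $\ord_{q=\infty}\sfT_d\geq\tfrac{1}{2}(d,d)+m\cdot d+c$ for a suitable positive-definite inner product on $H_2(X)$.

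The main obstacle in both directions is converting first-order recursions into genuinely \emph{quadratic} estimates. In the forward direction the naive inductive bound degrades through the parallelogram-type identity $(d_1,d_1)+(d-d_1,d-d_1)\geq\tfrac{1}{2}(d,d)$, so the coefficient of $(d,d)$ in $\varphi$ must be chosen strictly smaller than $\tfrac{1}{2}$ and the linear shift from nef-ness of $p$ must be exploited to close the induction. In the backward direction the quadratic bound on $\sfT$ emerges only after iterating the recursion $O(|d|)$ times, and one must choose enough nef classes so that the quadratic form built ray-by-ray is positive definite on $\Eff(X)$.
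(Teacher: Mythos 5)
Your overall plan follows the paper's spirit (manipulate the $q$-difference relation $\sfT\sfA = Pq^{pQ\partial_Q}\sfT$ and compare orders at $q=\infty$), but both directions have real gaps in the execution. In the forward direction you propose an induction on the recursion $\sfA_d = q^{p\cdot d}P\sfT_d - \sum_{d_1>0}\sfT_{d_1}\sfA_{d-d_1}$ to bound $\ord_{q=\infty}\sfA_d \ge \varphi(d)$ for a quadratic $\varphi(d)=\epsilon(d,d)+\cdots$, and you identify the ``parallelogram inequality'' and the constraint $\epsilon<\tfrac12$ as the issue. That is not the issue, and your proposed fix does not close the induction: when $d_1$ is small and $d$ is large, the mixed term requires $f(d_1)\ge \varphi(d)-\varphi(d-d_1)\approx 2\epsilon(d,d_1)$, whose right-hand side is unbounded in $d$ while $f(d_1)$ is fixed. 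This fails for \emph{every} $\epsilon>0$, not just $\epsilon\ge\tfrac12$. The key observation you are missing is that a quadratic bound on $\ord_{q=\infty}\sfA_d$ is unnecessary; one only needs $\ord_{q=\infty}\sfA_d\to\infty$ (so that eventually $\ord>0$ together with regularity at $q=0$ forces $\sfA_d=0$), and for that a \emph{linear} $\varphi$ suffices and the induction does close. The paper avoids the recursion entirely by expanding $\sfT^{-1}=\sum\sfS_dQ^d$ via $\sfS_d=\sum_{k\ge1}\sum(-1)^k\sfT_{d(1)}\cdots\sfT_{d(k)}$ and proving $\ord_{q=\infty}\sfS_d\to\infty$ by splitting into the cases $k\le|d|^{1/3}$ (use the quadratic growth of each factor) and $k>|d|^{1/3}$ (use $\ord\sfT_{d(j)}\ge1$).

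In the backward direction your plan to iterate ``along a ray $d=Nv$ with $v\in S_p$'' and then ``combine these ray-wise estimates'' does not work as stated: a general $d\in\Eff(X)$ is not a multiple of any element of the finite support $F$ of $\sfA$, and at each step of the recursion $\ord\sfT_d\ge p_i\cdot d + \min_{d'\in F}\ord\sfT_{d-d'}+C$ the minimizing $d'$ can change, so the iteration does not follow a ray. The paper instead runs a single induction over the partial order $d\prec d'\iff d'-d\in\Eff(X)$ and, crucially, \emph{constructs} the inner product so the induction closes: it sets $(d',d'')=\tfrac{1}{\sqrt{k}B}\sum_i(p_i\cdot d')(p_i\cdot d'')$ with $B=\max_{d'\in F}\|d'\|$ and $\|d\|=\sqrt{\sum_i(p_i\cdot d)^2}$, chooses $m$ with $m\cdot d'\le C$ for $d'\in F$, and then the error term $\max_i(p_i\cdot d_*)-\max_{d'\in F}(d_*,d')$ is nonnegative by Cauchy--Schwarz together with $\|d_*\|\le\sqrt{k}\max_i(p_i\cdot d_*)$. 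Without this specific normalization of the inner product, the per-step error does not cancel and the quadratic bound does not emerge; your proposal gives no mechanism for this cancellation, so the ``combining'' step is a genuine gap.
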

\begin{proof} 
The `only if' statement was (essentially) proved by Anderson-Chen-Tseng 
\cite[Proposition 5]{act} (see also Proposition~\ref{p.monomial} above) although it was not phrased in 
this way. 
We give another proof for the convenience of the reader. 
We expand $\sfT^{-1} = (1+ \sum_{d\neq 0} \sfT_d Q^d)^{-1} 
= \sum_d \sfS_d Q^d$. Then:  
\[
\sfS_d = \sum_{k\ge 1}
\sum_{\substack{d(1)+\cdots+d(k) = d, \\ 
d(j)\in \Eff(X)\setminus \{0\}}} (-1)^k 
\sfT_{d(1)} \cdots \sfT_{d(k)} 
\]
for $d\neq 0$.
 
We claim that $\ord_{q=\infty} \sfS_d\to \infty$ as $|d| :=\sqrt{(d,d)} \to \infty$.  By the quadratic growth condition \eqref{eq:qgc} and the fact that $\ord_{q=\infty} \sfT_d \ge 1$ for $d\neq 0$, when $d=d(1)+\cdots+d(k)$ with $d(j) \in \Eff(X)\setminus \{0\}$, we have 
\begin{equation} 
\label{eq:est} 
\ord_{q=\infty}(\sfT_{d(1)} \cdots \sfT_{d(k)})  
\ge \max(k, f(d(1)) + \cdots + f(d(k))) 
\end{equation} 
where $f(d) := \frac{1}{2}(d,d) + m\cdot d + c$. 
Since $|d| \le |d(1)| + \cdots + |d(k)|$, 
there exists $i$ such that $|d(i)| \ge |d|/k$. 
Therefore if $k\le |d|^{\frac{1}{3}}$, then
\begin{align*} 
f(d(1)) + \cdots + f(d(k)) & = \frac{1}{2} 
\left(\sum_{i=1}^k (d(i),d(i))\right) + m \cdot d + ck \\
& \ge \frac{1}{2} \frac{|d|^2}{k^2} 
- |m| |d| - |c| k \\ 
& \ge \frac{1}{2} |d|^{\frac{4}{3}}  - |m| |d| - |c| |d|^{\frac{1}{3}}
\end{align*}  
Hence by \eqref{eq:est}, 
\[
\ord_{q=\infty} (\sfT_{d(1)} \cdots \sfT_{d(k)})  
\ge \min\left(
|d|^{\frac{1}{3}},  
\frac{1}{2} |d|^{\frac{4}{3}}  - |m| |d| - |c| |d|^{\frac{1}{3}} \right) 
\]
and the right-hand side diverges as $|d| \to \infty$.  
This proves the claim.

Let $\sfA$ be the $q$-shift operator associated with a nef 
class $p = -c_1(P)$.  Writing $\sfA = \sum_{d} \sfA_d Q^d$, we have 
\[
\sfA_d = \sum_{d'+d'' = d} \sfS_{d'} P q^{p\cdot {d''}} 
\sfT_{d''}.   
\]
Since $p$ is nef, $\sfA$ is regular at $q=0$ (see \cite[Proposition 2.10]{imt}).  On the other hand, using the quadratic growth condition \eqref{eq:qgc} 
again, we have 
\[
\ord_{q=\infty} \sfA_d \ge \min_{d'+d'' = d} 
(\ord_{q=\infty} \sfS_{d'} +f(d'') - p\cdot d'').  
\]
The right-hand side is positive for a sufficiently large $|d|$.  
In fact, both $N'= \{d' \in \Eff(X) : \ord_{q=\infty} \sfS_{d'}<0\}$ 
and $N'' = \{ d'' \in \Eff(X) : f(d'') - p\cdot d''<0\}$ are 
finite sets; when $d'\in N'$ and $d'+d''=d$, we have 
$f(d'') - p\cdot d''\to \infty$ as $|d|\to \infty$; 
similarly, when $d'' \in N''$ and $d'+d''=d$, we have 
$\ord_{q=\infty} \sfS_{d'}\to \infty$ as $|d| \to \infty$. 
Therefore $\sfA_d$ is regular at $q=0$ and 
$\ord_{q=\infty} \sfA_d>0$ for sufficiently large $|d|$. 
This implies that $\sfA_d = 0$ for sufficiently large $|d|$, i.e.~$\sfA$ is a polynomial in $Q$. 

Next we show the `if' statement. 
Suppose that all $q$-shift connections $\sfA$ 
associated with nef classes $p = -c_1(P)$ are polynomials in $Q$. 
Choose line bundles $P_1,\dots,P_k$ such that $p_i = -c_1(P_i)$ 
is nef and that $p_1,\dots,p_k$ form a basis of $H^2(X,\R)$. 
Let $\sfA^{(i)}$ be the $q$-shift connection associated with $P_i$.  
By assumption, there exists a finite set $F\subset \Eff(X) \setminus \{0\}$ 
of degrees such that $\sfA^{(i)}$ is expanded in the form: 
\[
\sfA^{(i)} = P_i + \sum_{d\in F} \sfA^{(i)}_d Q^d.  
\]
The fundamental solution $\sfT$ satisfies the $q$-difference 
equation $P_i q^{p_i Q\parfrac{}{Q}} \sfT = \sfT \sfA^{(i)}$, 
and therefore we have 
\begin{equation} 
\label{eq:diff_eq}
P_i q^{p_i \cdot d} \sfT_d = \sfT_d P_i 
+ \sum_{d' \in F} \sfT_{d-d'} \sfA^{(i)}_{d'}. 
\end{equation} 
Suppose $p_i\cdot d>0$. Then we have 
\[
\ord_{q=\infty} \sfT_d \ge 
p_i \cdot d + 
\min_{d'\in F}\left( \ord_{q=\infty} \sfT_{d-d'} \right)+C 
\] 
where $C:=\min_{1\le i\le k, d'\in F} (\ord_{q=\infty} \sfA^{(i)}_{d'})$. 
Note that the first term in the right-hand side of \eqref{eq:diff_eq} 
does not contribute to the vanishing order of $\sfT_d$ at $q=\infty$
because $p_i\cdot d>0$. 
Since this holds for all $i$ with $p_i \cdot d>0$, and 
there exists at least one $i$ with $p_i \cdot d>0$ 
when $d\in \Eff(X) \setminus \{0\}$ 
(note that $p_i \cdot d \ge 0$ since $p_i$ is nef), we have 
\begin{equation} 
\label{eq:recursive} 
\ord_{q=\infty} \sfT_d \ge 
\max_{1\le i\le k} \left( p_i \cdot d\right) + 
\min_{d'\in F} \left(\ord_{q=\infty} \sfT_{d-d'} \right)+C 
\end{equation} 
for all $d\in \Eff(X)\setminus \{0\}$. 
Introduce the norm $\|d\| := \sqrt{\sum_{i=1}^k (p_i \cdot d)^2}$ 
and set $B := \max_{d\in F} \|d\|$. 
Define the positive-definite inner product $(\cdot,\cdot)$ 
on $H_2(X)$ by 
\[
(d',d'') = \frac{1}{\sqrt{k} B}\sum_{i=1}^k (p_i \cdot d') (p_i \cdot d'').
\] 
Choose a class $m \in H^2(X)$ such that 
$m\cdot d\le C$ for all $d\in F$. This is possible since $F$ is a finite 
set contained in $\Eff(X) \setminus \{0\}$. 
We claim that 
\begin{equation} 
\label{eq:ind_hyp} 
\ord_{q=\infty} \sfT_d \ge \frac{1}{2} (d,d) + m\cdot d.  
\end{equation} 
This is true for $d=0$. We introduce a partial order $\prec$ in $\Eff(X)$ 
so that $d \prec d'$ if and only if $d'- d\in\Eff(X)$. 
Since every infinite descending chain 
$d(1)\succ d(2) \succ d(3) \succ \cdots$ in $\Eff(X)$ stabilizes, 
the induction argument works for this order. 
Suppose that $d_*\in \Eff(X)\setminus\{0\}$ and 
that \eqref{eq:ind_hyp} holds for all $d\in \Eff(X)$ with $d\prec d_*$.  
Using \eqref{eq:recursive} and the induction hypothesis, we have  
\begin{align*} 
\ord_{q=\infty} \sfT_{d_*} 
& \ge \max_{1\le i\le k} \left( p_i \cdot d_*\right) + 
\min_{d'\in F} \left( \frac{1}{2}(d_*-d',d_*-d') + m\cdot (d_*-d') \right) + C \\
& \ge \frac{1}{2}(d_*,d_*) + m\cdot d_* 
+ 
\max_{1\le i\le k} \left( p_i \cdot d_*\right)  
- \max_{d'\in F}(d_*, d') - \max_{d'\in F} (m \cdot d') +C \\ 
& \ge \frac{1}{2}(d_*,d_*) + m\cdot d_* 
+ \frac{1}{\sqrt{k}} \|d_*\| 
- \sqrt{(d_*,d_*)} \max_{d'\in F} \sqrt{(d',d')}  \\ 
& \ge \frac{1}{2}(d_*,d_*) + m\cdot d_* 
+ \frac{1}{\sqrt{k}} \|d_*\| 
- \frac{1}{\sqrt{k} B}  \|d_*\| \max_{d'\in F} \|d'\| \\ 
& \ge \frac{1}{2}(d_*,d_*) + m\cdot d_*.  
\end{align*} 
In the above computation, we used $\|d_*\| \le \sqrt{k} \max_{1\le i\le k} (p_i\cdot d_*)$. 
Hence the estimate \eqref{eq:ind_hyp} holds for $d_*$. 
The proposition is proved. 
\end{proof}

\begin{remark*} 
The Proposition holds also for the equivariant quantum $K$-theory.  The proof works verbatim. 
\end{remark*}

\end{document}